\theoremstyle{plain}             
\newtheorem{theorem}{Theorem}[section]
\newtheorem{corollary}[theorem]{Corollary}
\newtheorem{remark}[theorem]{Remark}
\newcommand{\spn}{\operatorname{span}}
\newcommand{\dof}{\operatorname{dof}}
\newcommand{\TT}{\operatorname{TT}}
\newcommand{\dd}{\operatorname{d}\!}
\begin{document}
\title[Low-rank approximation of continuous functions in mixed Sobolev spaces]
{Low-rank approximation of continuous functions in Sobolev spaces 
with dominating mixed smoothness}
\author{Michael Griebel}
\address{Michael Griebel,
Institut f\"ur Numerische Simulation,
Universit\"at Bonn, Friedrich-Hirzebruch-Allee 7, 53115 Bonn, Germany
and
Fraunhofer Institute for Algorithms and Scientific Computing (SCAI), 
Schloss Birlinghoven, 53754 Sankt Augustin, Germany
}
\email{griebel@ins.uni-bonn.de}
\author{Helmut Harbrecht}
\address{
Helmut Harbrecht,
Departement Mathematik und Informatik,
Universit\"at Basel, Spiegelgasse 1, 4051 Basel, Switzerland
}
\email{helmut.harbrecht@unibas.ch}
\author{Reinhold Schneider}
\address{Reinhold Schneider,
Institute of Mathematics,
Technical University of Berlin, Stra\ss{}e des 17.\ Juni 136, 10623 Berlin,
Germany
}
\email{schneidr@math.tu-berlin.de}
\date{}
\subjclass[2000]{41A17, 41A25, 41A30, 41A65}
\keywords{Low-rank approximation, Sobolev spaces with dominating 
mixed smoothness, approximation error, rank complexity.}

\begin{abstract}
Let  $\Omega_i\subset\mathbb{R}^{n_i}$, $i=1,\ldots,m$, be 
given domains. In this article, we study the low-rank approximation 
with respect to $L^2(\Omega_1\times\dots\times\Omega_m)$ of 
functions from Sobolev spaces with dominating mixed smoothness.
To this end, we first estimate the rank of a bivariate approximation,
i.e., the rank of the continuous singular value decomposition. In 
comparison to the case of functions from Sobolev spaces with
isotropic smoothness, compare \cite{GH14,GH19}, we obtain
improved results due to the additional mixed smoothness. This 
convergence result is then used to study the tensor train 
decomposition as a method to construct multivariate low-rank 
approximations of functions from Sobolev spaces with dominating 
mixed smoothness. We show that this approach is able to beat 
the curse of dimension.
\end{abstract}

\maketitle
\section{Introduction}
Many problems in science and engineering lead to problems
which are defined on the tensor product of two domains
$\Omega_1\subset\mathbb{R}^{n_1}$ and $\Omega_2
\subset\mathbb{R}^{n_2}$. Examples arise from the
second moment analysis of partial differential domains
with stochastic input parameters \cite{H,HSS,ST},
two-scale homogenization \cite{AB,CDG,HS2}, radiosity 
models and radiative transfer \cite{WHS}, or space-time 
discretizations of parabolic problems \cite{GOE}.
All these problems are directly given on the product 
of two domains. Furthermore, many problems are
posed on higher-order product domains $\Omega_1\times
\cdots\times\Omega_m$ with $\Omega_i\subset\mathbb{R}^{n_i}$,
$i=1,\ldots,m$. Prominent examples are non-Newtonian flows. 
These can be modelled by a coupled system which consists 
of the Navier Stokes equation for the flow in a three-dimensional 
geometry described by $\Omega_1$ and of the Fokker-Planck 
equation in a configuration space $\Omega_2\times\cdots
\times\Omega_m$ consisting of $m-1$ spheres. Here, $m$ 
denotes the number of atoms in a chain-like molecule which 
constitutes the non-Newtonian behaviour of the flow, for 
details see \cite{BKS,LL,LOZ}. Another class of examples 
arises from uncertainty quantification, where one has the 
product of a physical domain $\Omega_1$ with a high-dimensional 
cube $\Omega_2\times\cdots\times\Omega_m = [-1,1]^{m-1}$ 
for the stochastic parameter, compare e.g.~\cite{GS}. A third 
example stems from multiscale homogenization. Then, each 
scale $i$ involves a corresponding physical domain $\Omega_i$ 
and we encounter a problem on the product domain $\Omega_1 
\times\cdots\times\Omega_m$ for multiscale homogenization 
with $m$ well-separated scales.

In this article, we therefore study the low-rank approximation
for problems posed on product domains. We start our investigations
first with the convergence of the bivariate approximation
\begin{equation}\label{eq:bivariateSVD}
   f_R(\boldsymbol{x}_1,\boldsymbol{x}_2) 
   	= \sum_{r=1}^R g_r^{(1)}(\boldsymbol{x}_1) g_r^{(2)}(\boldsymbol{x}_2)
\end{equation}
with respect to $L^2(\Omega_1\times\Omega_2)$. 
While it is well known that the optimal rank $R$ can be 
determined by the truncated singular value decomposition,
the \emph{convergence} with respect to the rank $R$ 
is not so easy to determine. In \cite{GH14,GH19}, the 
question of the optimal rank $R$ has been answered 
in case of functions from isotropic Sobolev spaces.
But the technique used there yields no gain if additional 
smoothness is provided by Sobolev spaces with dominant 
mixed smoothness. In order to exploit such extra regularity, 
we construct in this article specific low-rank approximations 
with known convergence properties by means of sparse 
tensor product approximations. These are known to exploit 
dominating mixed smoothness in an optimal way. As a 
consequence, we are able to improve the results from 
\cite{GH14,GH19} considerably. Indeed, the decay of
the singular values is up to a factor two faster than in
case of functions with isotropic Sobolev smoothness.

We then consider the situation of bivariate approximation
if also mixed Sobolev smoothness is provided on each 
subdomain. This means that we study the low-rank approximation
\begin{equation}\label{eq:multivariateSVD}
   f_R(\boldsymbol{x}_1,\ldots,\boldsymbol{x}_{\ell},\boldsymbol{x}_{\ell+1},\ldots,\boldsymbol{x}_m) 
   	= \sum_{r=1}^R g_r^{(1)}(\boldsymbol{x}_1,\ldots,\boldsymbol{x}_{\ell}) 
		g_r^{(2)}(\boldsymbol{x}_{\ell+1},\ldots,\boldsymbol{x}_m)
\end{equation}
if mixed Sobolev smoothness is provided not only 
between the two subdomains $\Omega_1\times\cdots\times\Omega_\ell$  
and $\Omega_{\ell+1}\times\cdots\times\Omega_m$, but additionally 
{\em within} each of these subdomains as well. Thus, we have 
mixed smoothness for the full $m$-variate situation. We like 
to mention that our findings generalize the results from 
\cite{T1,T2,T3} for periodic functions on the $m$-cube to 
arbitrary product domains. We allow moreover arbitrary 
product domains with possibly different smoothness indices 
on each subdomain. Our results, however, coincide 
with \cite{T1,T2,T3} in the simple setting of $\Omega_1 
= \cdots = \Omega_m = [0,1]$ and periodic functions 
from Sobolev spaces of dominating mixed smoothness.

After studying the convergence of the approximation
\eqref{eq:multivariateSVD}, we are ready to consider 
the tensor train approximation. The tensor train is a 
tensor format which can be used to efficiently approximate 
multivariate functions, compare \cite{Hack,Ose11}. As 
we will see, this format is able to essentially beat 
the curse of dimension in case of 
functions with dominating mixed derivatives. Note 
that the tensor train (TT) format is a particular architecture 
of the hierarchical Tucker (HT) format \cite{HaKu}. Such 
tensor representations are known in computational
quantum physics and quantum information 
theory as tensor networks or more precisely, 
as tree-based tensor networks \cite{AN}. All 
these names are used in the literature \cite{BNS}. 
The present investigations can be extended to the 
hierarchical Tucker format by using the same or similar 
ideas, cf.~\cite{SU}. This generalization is obvious, 
but requires an extended machinery of notations, 
definitions, etc. For the sake of simplicity of presentation,
we refrain from a detailed consideration here and 
merely focus on the tensor train format.

The remainder of this article is organized as follows. 
In Section~\ref{sect2}, we specify the requirements 
of multiscale hierarchies on each subdomain. They 
will be used to construct appropriate sparse tensor 
approximations in case of multivariate functions in 
Section~\ref{sect3}. In Section~\ref{sect4}, we compute 
bounds on the truncation error of the singular value 
decomposition \eqref{eq:bivariateSVD} in the case of 
functions $f\in L^2(\Omega_1\times\Omega_2)$. 
In Section~\ref{sect5}, we then consider bounds 
of the truncated singular value decomposition 
\eqref{eq:multivariateSVD} in the case of functions 
$f\in L^2(\Omega_1\times\cdots\times\Omega_m)$.
Then, in Section~\ref{sec:TT}, we use the results of 
the previous sections to establish bounds for the 
tensor train format in the continuous setting. Finally, 
we state concluding remarks in Section~\ref{sec:conclusion}.

Throughout this article, the notion ``essential'' 
in the context of complexity estimates means 
``up to logarithmic terms''. Moreover, to avoid the 
repeated use of generic but unspecified constants, 
we denote by $C \lesssim D$ that $C$ is bounded 
by a multiple of $D$ independently of parameters 
on which $C$ and $D$ may depend. Obviously, 
$C \gtrsim D$ is defined as $D \lesssim C$, and 
$C \sim D$ as $C \lesssim D$ and $C \gtrsim D$.

\section{Approximation on the subdomains}\label{sect2}
Let $\Omega\in\mathbb{R}^n$ be a sufficiently smooth, 
bounded domain.\footnote{An $n$-dimensional, smooth, 
compact, and orientable manifold in $\mathbb{R}^{n+1}$ 
can also be considered here and in the following.} We 
consider a nested sequence of finite dimensional subspaces
 \begin{equation}		\label{eq:multiscale}
  V_0 \subset V_1 \subset\cdots\subset V_j\subset\cdots\subset L^2(\Omega),
  \quad V_j = \operatorname{span}\{\Phi_j\},
\end{equation}
which consists of piecewise polynomial ansatz functions
$\Phi_j := \{\varphi_{j,k}:k\in\Delta_j\}$, where $\Delta_j$ 
denotes a suitable index set, such that $\dim V_j\sim 2^{jn}$ and
\begin{equation}	\label{eq:dense}
  L^2(\Omega) = \overline{\bigcup_{j\in\mathbb{N}_0}V_j}.
\end{equation}

Since we intend to approximate functions in these
spaces $V_j$, we assume that the approximation property
\begin{equation}	\label{eq:approx}
 \inf_{v_j\in V_j}\|u-v_j\|_{L^2(\Omega)}
 	\lesssim h_j^s\|u\|_{H^s(\Omega)},
		\quad u\in H^s(\Omega),
\end{equation}
holds for $0\le s\le r$ uniformly in $j$. Here we set 
$h_j := 2^{-j}$, i.e., $h_j$ corresponds to the width of the 
mesh associated with the subspace $V_j$ on $\Omega$. 
The norm in $H^s(\Omega)$ is defined 
as usual, see \cite{Wloka} for example, while the 
integer $r$ refers to the {\em polynomial exactness}, 
that is the maximal order of polynomials which are 
locally contained in the space $V_j$. 

We now introduce a wavelet basis associated with 
the multiscale analysis \eqref{eq:multiscale} and 
\eqref{eq:dense} as follows: The wavelets $ \Psi_j := \{\psi_{j,k}:
k\in\nabla_j\}$, where $\nabla_j:=\Delta_j\setminus\Delta_{j-1}$, 
are the bases of the complementary spaces $W_j$ of 
$V_{j-1}$ in $V_j$, i.e.,
\[
  V_j = V_{j-1}\oplus W_j, \quad V_{j-1}\cap W_j = \{0\},
  	\quad W_j = \spn\{\Psi_j\}.
\]
Recursively we obtain
\[
  V_J = \bigoplus_{j=0}^J W_j,\quad W_0 := V_0,
\]
and thus, with
\[
  \Psi_J := \bigcup_{j=0}^J\Psi_j, \quad \Psi_0 := \Phi_0,
\]
we get a wavelet basis in $V_J$. A final requirement is that 
the infinite collection $\Psi := \bigcup_{j\ge 0}\Psi_j$ forms
a Riesz basis of $L^2(\Omega)$. Then, there exists also 
a biorthogonal, or dual, wavelet basis $\widetilde\Psi
= \bigcup_{j\ge 0}\widetilde\Psi_j = \{\widetilde\psi_{j,k}:
k\in\nabla_j,\,j\ge 0\}$ which defines a dual multiscale analysis, 
compare e.g.~\cite{DA} for further details. In particular, each 
function $f\in L^2(\Omega)$ admits the unique representation
\begin{equation}	\label{eq:expansion}
  f = \sum_{j=0}^\infty\sum_{k\in\nabla_j} 
  	(f,\widetilde\psi_{j,k})_{L^2(\Omega)}\psi_{j,k}.
\end{equation}

With the definition of the projections
\[
  Q_j:L^2(\Omega)\to W_j,\quad Q_jf 
  	= \sum_{k\in\nabla_j} (f,\widetilde\psi_{j,k})_{L^2(\Omega)}\psi_{j,k}
\]
the atomic decomposition \eqref{eq:expansion} gives 
rise to the multilevel decomposition
\[
  f = \sum_{j=0}^\infty Q_jf.
\]
Then, for any $f\in H^s(\Omega)$, the approximation 
property \eqref{eq:approx} induces the estimate
\begin{equation}	\label{eq:ops}
  \|Q_jf\|_{L^2(\Omega)}
 	\lesssim 2^{-js}\|f\|_{H^s(\Omega)},\quad 0\le s\le r.
\end{equation}

\section{Sparse tensor product spaces}\label{sect3}
Consider now two domains $\Omega_1\subset\mathbb{R}^{n_1}$
and $\Omega_2\subset\mathbb{R}^{n_2}$ with $n_1,n_2\in\mathbb{N}$. 
We aim at the approximation of functions in $L^2(\Omega_1
\times\Omega_2)$. To this end, we assume individually for each 
subdomain $\Omega_i$, $i=1,2$, the multiscale analyses
\[
  V_0^{(i)}\subset V_1^{(i)}\subset V_2^{(i)}\subset\cdots
  	\subset L^2(\Omega_i),\quad V_j^{(i)} = \spn\{\Phi_j^{(i)}\},\quad i=1,2,
\]
with associated complementary spaces
\[
  V_j^{(i)} = V_{j-1}^{(i)}\oplus W_j^{(i)}, \quad V_{j-1}^{(i)}\cap W_j^{(i)} = \{0\},
  	\quad W_j^{(i)} = \spn\{\Psi_j^{(i)}\}.
\]
Furthermore, let us denote the polynomial exactnesses 
of the spaces $V_j^{(1)}$ and $V_j^{(2)}$ by $r_1$ and 
$r_2$, respectively.

In this article, we employ the special sparse tensor 
product space\footnote{Here and in the following, the 
summation limits are in general no natural numbers and
must of course be rounded properly. We leave this to the 
reader to avoid cumbersome floor/ceil-notations.}
\begin{equation}\label{sgneu}
  \widehat{V}_J^\sigma := \bigoplus_{j_1\sigma+\frac{j_2}{\sigma}\le J} 
  	W_{j_1}^{(1)}\otimes W_{j_2}^{(2)}
  = \bigoplus_{j_1\sigma+\frac{j_2}{\sigma}= J} V_{j_1}^{(1)}\otimes W_{j_2}^{(2)}
\end{equation}
for an {\em arbitrary\/} parameter $\sigma>0$. In 
particular, the index pairs $(j_1,j_2)\in
\mathbb{N}_0\times \mathbb{N}_0$ of the
included tensor product spaces $W_{j_1}^{(1)}
\otimes W_{j_2}^{(2)}$ satisfy the relations
\[
 0\le j_1 \le \frac{1}{\sigma}J-\frac{1}{\sigma^2}j_2,
  \quad 0\le j_2\le\sigma J-\sigma^2 j_1.
\]
Reasonable choices of the parameter $\sigma$ 
could be as follows:
\begin{itemize}
\item
We may equilibrate the degrees of freedom in all tensor 
product spaces $W_{j_1}^{(1)}\otimes W_{j_2}^{(2)}$, 
that is the dimension $\dim(W_{j_1}^{(1)}\otimes W_{j_2}^{(2)})
= \dim(W_{j_1}^{(1)})\cdot\dim(W_{j_2}^{(2)})$, whose 
indices $(j_1,j_2)$ satisfy $j_1\sigma+j_2/\sigma=J$.
This choice leads to $\sigma = \sqrt{n_1/n_2}$.
\item
The sparse tensor product space $\widehat{V}_J^\sigma$ 
\eqref{sgneu} can be rewritten as
\[
  \widehat{V}_J^\sigma = \sum_{j_1\sigma+j_2/\sigma=J}
  	V_{j_1}^{(1)}\otimes V_{j_2}^{(2)}.
\]
Then, it can be seen easily that the choice $\sigma := \sqrt{r_1/r_2}$ 
equilibrates the approximation power of the contained tensor 
product spaces $V_{j_1}^{(1)}\otimes V_{j_2}^{(2)}$.
\item
Following the idea of an {\em equilibrated cost-benefit rate} 
(see \cite{BG}), we get the condition
\[
  2^{j_1(n_1+r_1)}\cdot 2^{j_2(n_2+r_2)} \overset{!}{=} 2^{J\cdot const}.
\]
Then, by choosing $const = \sqrt{(n_1+r_1)(n_2+r_2)}$,
we  arrive at $\sigma = \sqrt{\frac{n_1+r_1}{n_2+r_2}}$.
\end{itemize}

We now repeat the following results from \cite{GH13a} 
as they will be essential for our analysis of low-rank 
approximations in Sobolev spaces of dominating mixed 
smoothness.

\begin{theorem}[see \cite{GH13a}]\label{thm:complexity}
The dimension of the sparse tensor product space
\begin{equation}\label{eq:Basisdarstellung}
  \widehat{V}_J^\sigma = \sum_{\sigma j_1+ j_2/\sigma\le J} 
  	W_{j_1}^{(1)}\otimes W_{j_2}^{(2)}
\end{equation}
is essentially $\mathcal{O}(2^{J\max\{n_1/\sigma,n_2\sigma\}})$. 
More precisely, it holds
\begin{equation}	\label{eq:sg_cost_0}
  \dim\widehat{V}_J^\sigma\lesssim\begin{cases} 
  	2^{J\max\{n_1/\sigma,n_2\sigma\}},&\text{if $n_1/\sigma\not= n_2\sigma$},\\
	2^{Jn_2\sigma}J,&\text{if $n_1/\sigma= n_2\sigma$}.\end{cases}
\end{equation}
\end{theorem}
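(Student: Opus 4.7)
The plan is to compute $\dim \widehat{V}_J^\sigma$ by adding up the dimensions of the constituent tensor-product summands. From the standing assumption $\dim V_j^{(i)} \sim 2^{j n_i}$ together with the direct-sum splitting $V_j^{(i)} = V_{j-1}^{(i)}\oplus W_j^{(i)}$, one has $\dim W_{j_i}^{(i)} \sim 2^{j_i n_i}$. Since by \eqref{eq:Basisdarstellung} the sparse tensor space is a direct sum of the spaces $W_{j_1}^{(1)}\otimes W_{j_2}^{(2)}$ over all admissible index pairs, this yields
\[
  \dim \widehat{V}_J^\sigma \;\sim\; \sum_{\substack{j_1,j_2\geq 0\\ \sigma j_1 + j_2/\sigma \leq J}} 2^{j_1 n_1 + j_2 n_2}.
\]

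Next, I would fix $j_1$ and bound the inner geometric series in $j_2$ by a constant times its largest term, using $j_2 \leq \sigma J - \sigma^2 j_1$:
\[
  \sum_{j_2 = 0}^{\sigma J - \sigma^2 j_1} 2^{j_2 n_2} \;\lesssim\; 2^{n_2(\sigma J - \sigma^2 j_1)}.
\]
Factoring out $2^{n_2\sigma J}$ reduces the estimate to a single sum,
\[
  \dim \widehat{V}_J^\sigma \;\lesssim\; 2^{n_2\sigma J}\sum_{j_1 = 0}^{J/\sigma} 2^{j_1(n_1 - n_2\sigma^2)}.
\]
This is the core reduction: everything now turns on the sign of the exponent $n_1 - n_2\sigma^2 = \sigma(n_1/\sigma - n_2\sigma)$.

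The remaining step is a case analysis. If $n_1/\sigma > n_2\sigma$, the outer geometric sum is dominated by its last term, and combining with the prefactor gives $2^{n_2\sigma J}\cdot 2^{(J/\sigma)(n_1 - n_2\sigma^2)} = 2^{Jn_1/\sigma}$. If $n_1/\sigma < n_2\sigma$, the outer sum is absolutely convergent and uniformly $\mathcal{O}(1)$, leaving the bound $2^{Jn_2\sigma}$. Together these produce the first branch of \eqref{eq:sg_cost_0} with the maximum in the exponent.

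The only technical subtlety — and the one step worth flagging — is the critical balance $n_1/\sigma = n_2\sigma$. There the common ratio of the outer series is exactly $1$, so the geometric-series shortcut fails and one instead has $\sum_{j_1=0}^{J/\sigma} 1 \sim J$. This produces precisely the additional linear factor $J$ in the second branch of \eqref{eq:sg_cost_0}. Modulo this degeneracy, the remaining calculations are elementary.
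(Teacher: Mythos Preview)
The paper does not actually prove this theorem here; it is quoted from \cite{GH13a} and stated without argument. Your proof is correct and is precisely the standard counting argument one would expect in \cite{GH13a}: sum $\dim W_{j_1}^{(1)}\cdot\dim W_{j_2}^{(2)}\sim 2^{j_1n_1+j_2n_2}$ over the admissible simplex, evaluate the inner geometric sum, and do a trichotomy on the sign of $n_1-n_2\sigma^2$ for the outer one. The only remark worth adding is that in the subcritical case $n_1/\sigma<n_2\sigma$ the implied constant in your ``uniformly $\mathcal{O}(1)$'' bound on $\sum_{j_1\ge 0}2^{j_1(n_1-n_2\sigma^2)}$ depends on $\sigma$ (it blows up as $\sigma^2\to n_1/n_2$), which is exactly what the paper flags in the sentence following \eqref{eq:sg_cost_0}.
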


The constant in the estimate \eqref{eq:sg_cost_0} depends 
on the particular choice of $\sigma$. Note that the 
sparse tensor product spaces $\widehat{V}_J^\sigma$
contains essentially less degrees of freedom than the full tensor 
product space $V_{J/\sigma}^{(1)}\otimes V_{J\sigma}^{(2)}$, 
which possesses, up to a constant, $2^{J(n_1/\sigma+n_2\sigma)}$ 
degrees of freedom.

In order to determine the best choice of $\sigma$ later on, 
we need to know the rate of approximation in the sparse tensor 
spaces $\widehat{V}_J^\sigma$. To this end, for $s_1,s_2\ge 0$, 
we introduce the anisotropic Sobolev spaces
\[
  H_{mix}^{s_1,s_2}(\Omega_1\times\Omega_2)
  	:= H^{s_1}(\Omega_1)\otimes H^{s_2}(\Omega_2),
\]
equipped with the standard cross norm. As we consider here 
the tensor product of Hilbert spaces, the topological tensor 
product and the algebraic tensor product coincide. Obviously, 
the highest possible rate of convergence is attained in the 
space $H_{mix}^{r_1,r_2}(\Omega_1\times\Omega_2)$.
Therefore, in the following, we restrict ourselves without 
loss of generality to $s_1\le r_1$ and $s_2\le r_2$.

\begin{theorem}[see \cite{GH13a}]\label{thm:accuracy1}
Let $0< s_1\le r_1$, $0< s_2\le r_2$ and $f\in H_{mix}^{s_1,s_2}
(\Omega_1\times\Omega_2)$. Then, the approximation 
\begin{equation}	\label{eq:solution1}
  \widehat{f}_J = \sum_{j_1\sigma + \frac{j_2}{\sigma}\le J}
	 \big(Q_{j_1}^{(1)}\otimes Q_{j_2}^{(2)}\big)f\in\widehat{V}_J^\sigma
\end{equation}
satisfies
\begin{equation}	\label{eq:convergence rate1}
    \|f-\widehat{f}_J\|_{L^2(\Omega_1\times\Omega_2)}\lesssim
    \begin{cases} 2^{-J\min\{s_1/\sigma,s_2\sigma\}}
    	\|f\|_{H_{mix}^{s_1,s_2}(\Omega_1\times\Omega_2)},
    		&\text{if $s_1/\sigma \not=s_2\sigma$},\\
    2^{-Js_1/\sigma}\sqrt{J}\|f\|_{H_{mix}^{s_1,s_2}(\Omega_1\times\Omega_2)},
    	&\text{if $s_1/\sigma=s_2\sigma$}.\end{cases}
\end{equation}
\end{theorem}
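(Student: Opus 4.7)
The plan is to combine the (biorthogonal) multilevel decomposition available from Section~\ref{sect2} applied componentwise with the approximation estimate \eqref{eq:ops} in tensorized form, and then to carry out a careful geometric summation over the sparse-grid index set.

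First I would write the error as a projection onto the orthogonal (in the Riesz-basis sense) complement of $\widehat{V}_J^\sigma$. Since $\Psi^{(1)}\otimes \Psi^{(2)}$ is a Riesz basis of $L^2(\Omega_1\times\Omega_2)$ and the projections $Q_{j_1}^{(1)}\otimes Q_{j_2}^{(2)}$ map onto the mutually complementary subspaces $W_{j_1}^{(1)}\otimes W_{j_2}^{(2)}$, one obtains the norm equivalence
\[
  \|f-\widehat{f}_J\|_{L^2(\Omega_1\times\Omega_2)}^2
  \sim \sum_{j_1\sigma+j_2/\sigma>J}
  \bigl\|\bigl(Q_{j_1}^{(1)}\otimes Q_{j_2}^{(2)}\bigr)f\bigr\|_{L^2(\Omega_1\times\Omega_2)}^2.
\]

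Next I would estimate each summand. Applying \eqref{eq:ops} in the first variable with exponent $s_1$ and in the second variable with exponent $s_2$, and using that the topological and algebraic tensor products coincide for Hilbert spaces, yields
\[
  \bigl\|\bigl(Q_{j_1}^{(1)}\otimes Q_{j_2}^{(2)}\bigr)f\bigr\|_{L^2(\Omega_1\times\Omega_2)}
  \lesssim 2^{-j_1 s_1}\, 2^{-j_2 s_2}\, \|f\|_{H^{s_1,s_2}_{mix}(\Omega_1\times\Omega_2)}.
\]
Inserting this into the previous display reduces the proof to bounding
\[
  S(J):=\sum_{j_1\sigma+j_2/\sigma>J} 2^{-2(j_1 s_1+j_2 s_2)}.
\]

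The main calculation is to evaluate $S(J)$, and this is where the two cases arise. Writing $s_1 j_1 + s_2 j_2 = (s_1/\sigma)(j_1\sigma) + (s_2\sigma)(j_2/\sigma)$, one sees that on the half-plane $j_1\sigma+j_2/\sigma>J$ the minimum of the exponent is attained at the axis corresponding to $\min\{s_1/\sigma,s_2\sigma\}$. Summing over $j_1$ first for fixed $j_2$ (splitting according to whether $j_2/\sigma\le J$ or not) gives a geometric series with ratio $2^{-2(s_2-s_1/\sigma^2)}$ in $j_2$, whose exponent has the sign of $s_2\sigma-s_1/\sigma$. In the non-critical case $s_1/\sigma\neq s_2\sigma$ this series converges, leading to $S(J)\lesssim 2^{-2J\min\{s_1/\sigma,s_2\sigma\}}$. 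In the critical case $s_1/\sigma=s_2\sigma$ the ratio equals $1$ and the geometric series degenerates into a sum bounded by the number of admissible levels, which is proportional to $J$; this produces the extra factor $J$ and, after taking square roots, the factor $\sqrt{J}$ in \eqref{eq:convergence rate1}. Taking the square root of $S(J)\|f\|_{H^{s_1,s_2}_{mix}}^2$ yields the claimed estimate.

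The main obstacle is the bookkeeping in the critical case $s_1/\sigma=s_2\sigma$: one has to control the boundary contributions and the logarithmic factor honestly rather than absorbing them into generic constants. Apart from that, the argument is a straightforward tensorization of the one-dimensional multiresolution error estimate combined with a geometric sum over the sparse-grid triangle.
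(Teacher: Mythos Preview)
The paper does not actually prove this theorem; it is quoted from \cite{GH13a} and stated without proof here, so there is no in-paper argument to compare against. That said, your sketch is exactly the standard proof one finds in \cite{GH13a}: decompose the error into the complementary detail spaces, apply the tensorized level-wise bound $\|(Q_{j_1}^{(1)}\otimes Q_{j_2}^{(2)})f\|_{L^2}\lesssim 2^{-j_1s_1-j_2s_2}\|f\|_{H^{s_1,s_2}_{mix}}$, and then perform the geometric summation over $\{j_1\sigma+j_2/\sigma>J\}$, with the critical case $s_1/\sigma=s_2\sigma$ producing the extra $\sqrt{J}$ from the degenerate geometric series. Your bookkeeping in the summation (the ratio $2^{-2(s_2-s_1/\sigma^2)}$ and the split at $j_2/\sigma=J$) is correct, so the argument goes through as written.
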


The constant in estimate \eqref{eq:convergence rate1} 
depends again on the particular choice of $\sigma$.
Moreover, if $s_1<r_1$ and $s_2<r_2$, the factor $
\sqrt{J}$ for the case $s_1/\sigma = s_2\sigma$ in 
\eqref{eq:convergence rate1} can be removed by using 
more sophisticated estimates, compare \cite{GOS,PS1}.

Note that the combination of Theorems \ref{thm:complexity}
and \ref{thm:accuracy1} implies that, for any $\sigma$ which satisfies
the inequalities
\[
  \min\bigg\{\frac{r_1}{r_2},\frac{n_1}{n_2}\bigg\}
  	\le \sigma^2 \le \max\bigg\{\frac{r_1}{r_2},\frac{n_1}{n_2}\bigg\},
\]
the sparse tensor product spaces $\widehat{V}_J^\sigma$ 
offer essentially the same rate of convergence with respect of 
the degrees of freedom, compare \cite{GH13a}. In the following, 
we are looking for the low-rank approximation of functions. 
The key idea is to use the sparse tensor product space 
$\widehat{V}_J^\sigma$ as a tool to bound the rank properly. 

\section{Bivariate mixed Sobolev smoothness}\label{sect4}
We first like to estimate the rank which is required to represent
functions in $\widehat{V}_J^\sigma$. To this end, we make use of 
the fact that the sparse tensor product space is given as a direct 
sum of tensor products of single-scale spaces $V_{j_1}^{(1)}$ 
and complement spaces $W_{j_2}^{(2)}$ in accordance with 
\begin{equation}\label{eq:Basisdarstellung2}
\widehat{V}_J^\sigma = \sum_{j_2 = 0}^{J\sigma} 
	\sum_{j_1 = 0}^{J/\sigma-j_2/\sigma^2} W_{j_1}^{(1)}\otimes W_{j_2}^{(2)}
  	= \sum_{\sigma j_1+ j_2/\sigma = J} V_{j_1}^{(1)}\otimes W_{j_2}^{(2)},
\end{equation}
compare \eqref{eq:Basisdarstellung}. To this end, we make use
of the bases $\Phi_j^{(1)}$ and $\Psi_{j}^{(2)}$ and exploit that 
any function 
\[
\widehat{f}_J(\boldsymbol{x}_1,\boldsymbol{x}_2)
= \sum_{\sigma j_1+ j_2/\sigma= J} \sum_{k_1\in\Delta_{j_1}}\sum_{k_2\in\nabla_{j_2}}
  c_{(j_1,k_1),(j_2,k_2)} \varphi_{j_1,k_1}^{(1)}(\boldsymbol{x}_1)
  \psi_{j_2,k_2}^{(2)}(\boldsymbol{x}_2)\in\widehat{V}_J^\sigma
\]
can be rewritten as a tensor product function of finite rank 
in accordance with
\begin{equation}\label{eq:low-rank}
\begin{aligned}
   \widehat{f}_J(\boldsymbol{x}_1,\boldsymbol{x}_2) &= \sum_{\sigma j_1+ j_2/\sigma= J} 
   \sum_{k_2\in\nabla_{j_2}} g_{j_1,j_2,k_2}^{(1)}(\boldsymbol{x}_1) \psi_{j_2,k_2}^{(2)}(\boldsymbol{x}_2)\\
   	&= \sum_{\sigma j_1+ j_2/\sigma= J} \sum_{k_1\in\Delta_{j_1}}
	\varphi_{j_1,k_1}^{(1)}(\boldsymbol{x}_1) g_{j_1,j_2,k_1}^{(2)}(\boldsymbol{x}_2),
\end{aligned}
\end{equation}
where 
\begin{align*}
g_{j_1,j_2,k_2}^{(1)} &= \sum_{k_1\in\Delta_{j_1}}c_{(j_1,k_1),(j_2,k_2)}\varphi_{j_1,k_1}\in V_{j_1}^{(1)},\\
g_{j_1,j_2,k_1}^{(2)} &= \sum_{k_2\in\nabla_{j_2}}c_{(j_1,k_1),(j_2,k_2)}\psi_{j_2,k_2}\in W_{j_2}^{(2)}.
\end{align*}

By switching between both representations in
\eqref{eq:low-rank} for particular combinations of
$(j_1,j_2)$, we are able to derive the following 
result.

\begin{theorem}	\label{thm:rank}
Any function in the sparse tensor product space 
$\widehat{V}_J^\sigma$ can be represented 
as a tensor product function 
\[
   \widehat{f}_J(\boldsymbol{x}_1,\boldsymbol{x}_2) 
   	= \sum_{r=0}^{R} g_r^{(1)}(\boldsymbol{x}_1) 
		g_r^{(2)}(\boldsymbol{x}_2) 
\]
of rank $R$, where the rank is bounded by
\[
  R\lesssim R_J^\sigma:= 2^{\frac{J n_1n_2}{n_1/\sigma +\sigma n_2}}.
\]
\end{theorem}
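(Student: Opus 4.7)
My plan is to exploit the direct-sum decomposition \eqref{eq:Basisdarstellung2} and bound the rank of each summand separately by choosing the cheaper of the two representations in \eqref{eq:low-rank}. Writing
\[
\widehat{f}_J = \sum_{\sigma j_1 + j_2/\sigma = J} f_{j_1,j_2}, \qquad
f_{j_1,j_2}\in V_{j_1}^{(1)}\otimes W_{j_2}^{(2)},
\]
each block contributes at most $\min\{\dim V_{j_1}^{(1)},\dim W_{j_2}^{(2)}\}$ elementary tensors. Indeed, the first line of \eqref{eq:low-rank} gives a representation whose length equals $|\nabla_{j_2}|\sim 2^{j_2 n_2}$, and the second line gives one of length $|\Delta_{j_1}|\sim 2^{j_1 n_1}$; we pick whichever is smaller for each pair $(j_1,j_2)$.

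Summing over all admissible pairs then yields
\[
R \;\lesssim\; \sum_{\sigma j_1+j_2/\sigma = J} \min\bigl\{2^{j_1 n_1},\,2^{j_2 n_2}\bigr\}.
\]
Next I would locate the crossover point on the line $\sigma j_1+j_2/\sigma = J$, i.e., solve $j_1 n_1 = j_2 n_2$ together with the line constraint. A short calculation gives the crossover abscissa $j_1^\ast = J\sigma n_2/(\sigma^2 n_2 + n_1)$ with corresponding value
\[
j_1^\ast n_1 \;=\; j_2^\ast n_2 \;=\; \frac{J n_1 n_2}{n_1/\sigma + \sigma n_2},
\]
which is exactly the exponent appearing in $R_J^\sigma$.

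To finish, I would split the sum at $(j_1^\ast,j_2^\ast)$: for $j_1 \le j_1^\ast$ the term $2^{j_1 n_1}$ dominates and grows geometrically in $j_1$, so this subsum is bounded by a constant times its largest term $2^{j_1^\ast n_1}$; symmetrically for $j_2\le j_2^\ast$. Each geometric series converges regardless of the linear-in-$J$ length of the summation range, so the total is $R\lesssim 2^{j_1^\ast n_1} = R_J^\sigma$, as claimed.

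The only real obstacle I anticipate is making sure the geometric summation absorbs the $\mathcal{O}(J)$ many admissible pairs cleanly, i.e., that the ratios of consecutive terms on each side of the crossover are bounded away from $1$ uniformly in $J$; this follows because stepping from $(j_1,j_2)$ to the next admissible pair changes $j_1 n_1$ (respectively $j_2 n_2$) by a fixed positive amount determined by $\sigma, n_1, n_2$, so the ratios are constants $>1$ and the series converges. The bookkeeping with the non-integer stepping induced by $\sigma$ is handled by the rounding convention already announced in the footnote to \eqref{sgneu}.
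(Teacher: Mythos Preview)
Your proposal is correct and follows essentially the same route as the paper: both use the block decomposition \eqref{eq:Basisdarstellung2}, bound each block's rank by $\min\{2^{j_1 n_1},2^{j_2 n_2}\}$ via the two representations in \eqref{eq:low-rank}, locate the crossover $j_1^\star n_1=j_2^\star n_2$ on the line $\sigma j_1+j_2/\sigma=J$, and then sum the two resulting geometric series to obtain $R\lesssim 2^{Jn_1n_2/(n_1/\sigma+\sigma n_2)}$. Your added remark that the common ratio is a fixed constant $>1$ (so the $\mathcal{O}(J)$ many terms are absorbed uniformly) makes explicit a point the paper leaves implicit.
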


\begin{proof}
We start with the identity \eqref{eq:Basisdarstellung2}.
Since the rank of functions in the space $V_{j_1}^{(1)}\otimes 
W_{j_2}^{(2)}$ is bounded by 
\[
\min\big\{\dim V_{j_1}^{(1)}, \dim W_{j_2}^{(2)}\big\}\sim\min\{2^{j_1n_1},2^{j_2n_2}\}, 
\]
it holds
\[
  R\lesssim\sum_{j_1\sigma+j_2/\sigma=J} \min\{2^{j_1n_1},2^{j_2n_2}\}.
\]
Assume that the equilibrium in the bracket is obtained 
for $j_1^\star n_1 = j_2^\star n_2$, then we can split the
index set $\{(j_1,j_2): j_1\sigma+j_2/\sigma=J\}$ into the 
two index sets $\mathcal{I}_1:=\{(j_1,j_2): 0\le j_1\le j_1^\star,\ 
j_2 = J\sigma-j_1\sigma^2 \}$ and $\mathcal{I}_2:=\{(j_1,j_2): 
0\le j_2\le j_2^\star,\ j_1 = J/\sigma-j_2/\sigma^2\}$. In view
of $\min\{2^{j_1n_1},2^{j_2n_2}\} = 2^{j_1n_1}$ for all $(j_1,j_2)
\in\mathcal{I}_1$ and $\min\{2^{j_1n_1},2^{j_2n_2}\} = 2^{j_2n_2}$ 
for all $(j_1,j_2)\in\mathcal{I}_2$, we obtain
\begin{equation}\label{eq:intermediateR}
R\lesssim\sum_{j_1=0}^{j_1^\star} 2^{j_1n_1}
	+ \sum_{j_2=0}^{j_2^\star} 2^{j_2n_2}.
\end{equation}
Since it always holds $j_1\sigma+j_2/\sigma=J$, we
can determine $j_1^\star$ from the equation
\[
  j_1^\star n_1 = (J\sigma-j_1^\star \sigma^2) n_2,
\]
i.e., 
\[
  j_1^\star = \frac{Jn_2}{n_1/\sigma +\sigma n_2}\quad\text{and}\quad
  j_2^\star = \frac{Jn_1}{n_1/\sigma +\sigma n_2}.
\]
Inserting this into \eqref{eq:intermediateR} implies the assertion
\[
  R \lesssim 2^\frac{Jn_1n_2}{n_1/\sigma +\sigma n_2}.
\]
\end{proof}

Now, by combining Theorem~\ref{thm:accuracy1} with
Theorem~\ref{thm:rank}, we can express the convergence
rate in terms of the rank $R$. This gives us an upper
bound of the truncation error of the singular value 
decomposition for functions in spaces with 
dominating mixed smoothness.

\begin{corollary}\label{coro:cost complexity}
Let $0< s_1\le r_1$, $0< s_2\le r_2$ and $f\in H_{mix}^{s_1,s_2}
(\Omega_1\times\Omega_2)$. Choose $\sigma> 0$ arbitrarily and set
\begin{equation}\label{eq:alpha}
  \beta := \frac{n_1/\sigma +\sigma n_2}{n_1n_2}
  	\min\{s_1/\sigma,s_2\sigma\} = \frac{1}{n_1 n_2} \min\bigg\{\frac{s_1n_1}{\sigma^2}+s_1n_2,
		s_2n_1+s_2n_2 \sigma^2 \bigg\},
\end{equation}
Then, there exists a rank-$R$ approximation 
\[
   f_R(\boldsymbol{x}_1,\boldsymbol{x}_2) 
   	= \sum_{r=1}^R g_r^{(1)}(\boldsymbol{x}_1) g_r^{(2)}(\boldsymbol{x}_2) 
		\in L^2(\Omega_1\times\Omega_2)
\]
which approximates $f$ as
\[
  \|f-f_R\|_{L^2(\Omega_1\times\Omega_2)}\lesssim\begin{cases}
		R^{-\beta}\|f\|_{H_{mix}^{s_1,s_2}(\Omega_1\times\Omega_2)},
		&\text{if $s_1/\sigma\not= s_2\sigma$,}\\
	R^{-\beta}\sqrt{\log R}\|f\|_{H_{mix}^{s_1,s_2}(\Omega_1\times\Omega_2)},
		&\text{if $s_1/\sigma= s_2\sigma$.}\end{cases}
\]
The constants in these estimates do depend on $s_1$, $s_2$, $n_1$, 
$n_2$, and $\sigma$, but not on the rank $R$.
\end{corollary}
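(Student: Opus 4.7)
The strategy is to combine Theorems \ref{thm:accuracy1} and \ref{thm:rank} directly: the former controls the $L^2$-error of the sparse tensor product approximation $\widehat{f}_J$ in terms of the level $J$, the latter controls its rank in terms of $J$. Eliminating $J$ in favor of the target rank $R$ should yield the claim. Concretely, given $R$, I would choose the largest integer level $J$ for which the rank bound
\[
  R_J^\sigma = 2^{J n_1 n_2 / (n_1/\sigma + \sigma n_2)} \lesssim R,
\]
provided by Theorem \ref{thm:rank}, still holds. Solving for $J$ gives
\[
  J \sim \frac{n_1/\sigma + \sigma n_2}{n_1 n_2} \log_2 R.
\]
I would then take $f_R$ to be the sparse tensor product approximation $\widehat{f}_J$ from \eqref{eq:solution1}, which by Theorem \ref{thm:rank} admits a tensor product representation of rank $\lesssim R$.

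Second, I would invoke Theorem \ref{thm:accuracy1} to bound $\|f - f_R\|_{L^2(\Omega_1\times\Omega_2)}$ by $2^{-J\min\{s_1/\sigma,s_2\sigma\}}\|f\|_{H_{mix}^{s_1,s_2}(\Omega_1\times\Omega_2)}$, with an extra factor $\sqrt{J}$ in the balanced case $s_1/\sigma = s_2\sigma$. Plugging in the above choice of $J$, the exponent becomes
\[
  -J\min\{s_1/\sigma,s_2\sigma\} \sim -\frac{(n_1/\sigma + \sigma n_2)\min\{s_1/\sigma,s_2\sigma\}}{n_1 n_2} \log_2 R = -\beta \log_2 R,
\]
so that $2^{-J\min\{s_1/\sigma,s_2\sigma\}} \sim R^{-\beta}$, with $\beta$ as in \eqref{eq:alpha}. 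The equivalent second form of $\beta$ in \eqref{eq:alpha} is obtained by distributing the prefactor $(n_1/\sigma + \sigma n_2)/(n_1 n_2)$ into each argument of the minimum. In the balanced case, $\sqrt{J}$ transforms to $\sqrt{\log R}$ up to an irrelevant constant.

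The argument is essentially bookkeeping rather than a genuine obstacle. The only mild subtlety is that $J$ must be an integer while $R$ ranges continuously, so picking $J$ as the floor of the value above introduces constant factors in the rank and error estimates, which are absorbed into the $\lesssim$ notation. All constants inherited from Theorems \ref{thm:accuracy1} and \ref{thm:rank} depend on $s_1,s_2,n_1,n_2,\sigma$ but not on $R$, matching the assertion of the corollary.
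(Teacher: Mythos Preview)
Your proposal is correct and follows essentially the same approach as the paper: both combine Theorem~\ref{thm:accuracy1} with Theorem~\ref{thm:rank} by eliminating the level $J$ in favor of the rank via $R\sim R_J^\sigma = 2^{Jn_1n_2/(n_1/\sigma+\sigma n_2)}$, obtaining $2^{-J\min\{s_1/\sigma,s_2\sigma\}}\sim R^{-\beta}$ and replacing $\sqrt{J}$ by $\sqrt{\log R}$ in the balanced case. The paper's write-up is slightly terser (it simply sets $R=R_J^\sigma$ rather than inverting for $J$), but the content is identical.
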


\begin{proof}
Let $s_1/\sigma\not= s_2\sigma$ and observe that $R\lesssim 
R_J^\sigma = 2^\frac{Jn_1n_2}{n_1/\sigma +\sigma n_2}$ due 
to Theorem~\ref{thm:rank}. In order to estimate the convergence
with respect to the rank, we assume $R= R_J^\sigma$, which implies
\[
  R^{-\beta} = R^{-\frac{n_1/\sigma +\sigma n_2}{n_1n_2}
  	\min\{s_1/\sigma,s_2\sigma\}}\sim 2^{-J\min\{s_1/\sigma,s_2\sigma\}}.
\]
This yields the first error estimate in view of \eqref{eq:convergence rate1}.

In case of $s_1/\sigma= s_2\sigma$, the additional factor
$\sqrt{J}\lesssim\sqrt{\log R}$ needs to be inserted as a multiplicative 
factor. This completes the proof.
\end{proof}

The optimal rate of convergence with respect to
the rank is given if the expressions in the minimum 
in \eqref{eq:alpha} are balanced, i.e., if
\[
  \frac{s_1n_1}{\sigma^2}+s_1n_2 = s_2n_1+s_2n_2 \sigma^2.
\]
Straighforward calculation yields 
\[
  \sigma = \sqrt{\frac{s_1}{s_2}},
\]
which means we should equilibrate the approximation
power in the underlying sparse tensor product space. 
This would yield the rank estimate
\begin{equation}\label{eq:rate1}
  \|f-f_R\|_{L^2(\Omega_1\times\Omega_2)}
  	\lesssim R^{-\beta}\sqrt{\log{R}}\|f\|_{H_{mix}^{s_1,s_2}(\Omega_1\times\Omega_2)}
\end{equation}
with 
\begin{equation}\label{eq:rate2}
  \beta = \frac{s_1 n_2+s_2 n_1}{n_1n_2} = \frac{s_1}{n_1}+\frac{s_2}{n_2}.
\end{equation}

In contrast, in \cite{GH19}, we were able to prove only the rate
\[
  \beta = \max\bigg\{\frac{s_1}{n_1},\frac{s_2}{n_2}\bigg\}.
\]
Especially, the latter rate is already achieved for functions in 
the related isotropic Sobolev space
\[
  H_{iso}^{s_1,s_2}(\Omega_1\times\Omega_2)
  	:= H_{mix}^{s_1,0}(\Omega_1\times\Omega_2)
		\cap	H_{mix}^{0,s_2}(\Omega_1\times\Omega_2).
\]
As a consequence, the singular values of a function 
from $H_{mix}^{s_1,s_2}(\Omega_1\times\Omega_2)$
converge by a factor up to two faster than the singular values 
of a function from $H_{iso}^{s_1,s_2}(\Omega_1\times\Omega_2)$.

To conclude our study on the bivariate situation, we state the 
following result for the fully discrete singular value decomposition 
(where the eigenfunctions are approximated with respect to
single-scale spaces of fixed level of resolution as introduced 
in Section \ref{sect2}):

\begin{corollary}	\label{thm:work-svd}
The number of degrees of freedom, which is needed to approximate 
a function $f \in H_{mix}^{s_1,s_2}(\Omega_1\times\Omega_2)$ by 
the truncated singular value decomposition to a prescribed accuracy 
$\varepsilon$, is
\begin{equation}	\label{eq:work(svd)}
  \dof_{svd}(\varepsilon)\sim\varepsilon^{-\frac{n_1n_2}{s_1 n_2+s_2 n_1}}
  	\varepsilon^{-\max\{\frac{n_1}{\min\{s_1,r_1\}},\frac{n_2}{\min\{s_2,r_2\}}\}}.	
\end{equation}
\end{corollary}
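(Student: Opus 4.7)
The plan is to split the total error into (i) the truncation of the SVD and (ii) the discretization of the surviving singular components on single-scale subspaces, and then to balance both contributions against the prescribed tolerance~$\varepsilon$.

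First I would appeal to Corollary~\ref{coro:cost complexity} with the optimal choice $\sigma=\sqrt{s_1/s_2}$, which by \eqref{eq:rate1}--\eqref{eq:rate2} yields a rank-$R$ approximation $f_R$ satisfying $\|f-f_R\|_{L^2(\Omega_1\times\Omega_2)}\lesssim R^{-\beta}\sqrt{\log R}\,\|f\|_{H_{mix}^{s_1,s_2}}$ with $\beta = (s_1n_2+s_2n_1)/(n_1n_2)$. Requiring this quantity to be $\lesssim \varepsilon$ produces, up to logarithmic factors absorbed in $\sim$, the essential rank
\[
  R \;\sim\; \varepsilon^{-1/\beta} \;=\; \varepsilon^{-\frac{n_1n_2}{s_1n_2+s_2n_1}} .
\]

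Next I would discretize the singular expansion. Write the truncated SVD as $f_R = \sum_{r=1}^{R}\sigma_r\, u_r\otimes v_r$ with $\{u_r\}\subset L^2(\Omega_1)$ and $\{v_r\}\subset L^2(\Omega_2)$ orthonormal, and define the fully discrete candidate $f_R^{\rm disc} := (P_{j_1}^{(1)}\otimes P_{j_2}^{(2)})f_R$, where $P_{j_i}^{(i)}$ denotes the $L^2$-projection onto the single-scale space $V_{j_i}^{(i)}$. Since $f\in H_{mix}^{s_1,s_2}(\Omega_1\times\Omega_2)\hookrightarrow H^{s_i}(\Omega_i)\otimes L^2(\Omega_j)$, the cross-norm estimate combined with \eqref{eq:approx} and the usual truncation to $\min\{s_i,r_i\}$ yields
\[
  \|f_R - f_R^{\rm disc}\|_{L^2(\Omega_1\times\Omega_2)}
  \;\lesssim\; \bigl(2^{-j_1\min\{s_1,r_1\}} + 2^{-j_2\min\{s_2,r_2\}}\bigr)\,\|f\|_{H_{mix}^{s_1,s_2}},
\]
since projection errors from Section~\ref{sect2} applied tensorially along one factor are controlled uniformly in the rank through orthonormality of the singular vectors on the other factor.

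Balancing the two discretization errors with $\varepsilon$ gives $2^{j_i}\sim\varepsilon^{-1/\min\{s_i,r_i\}}$ and hence $\dim V_{j_i}^{(i)}\sim \varepsilon^{-n_i/\min\{s_i,r_i\}}$. Storing $f_R^{\rm disc}$ requires the coefficient arrays for the $R$ functions $P_{j_1}^{(1)}u_r$ and the $R$ functions $P_{j_2}^{(2)}v_r$, so the cost is $R\bigl(\dim V_{j_1}^{(1)}+\dim V_{j_2}^{(2)}\bigr)\sim R\cdot\max\{\dim V_{j_1}^{(1)},\dim V_{j_2}^{(2)}\}$, which multiplied with the rank bound from the first step produces precisely \eqref{eq:work(svd)}. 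The sharpness of the exponent follows from Corollary~\ref{coro:cost complexity} and the standard lower bound for approximation in $H^{s_i}(\Omega_i)$ by $V_{j_i}^{(i)}$.

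The delicate point, and the only real obstacle, is to control the discretization term uniformly in $R$: a naive triangle inequality over the $R$ summands would lose a factor $R$ and destroy the balance. The device above avoids this by applying the single-side projections directly to $f_R$ (equivalently, to $f$ and then to its SVD expansion), so that the error on each side is essentially the single-side regularity error of $f$ itself, independently of how many singular components survive the truncation.
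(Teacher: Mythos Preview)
Your proposal is correct and matches what the paper intends. The paper states Corollary~\ref{thm:work-svd} without proof, only indicating in the parenthetical remark that ``the eigenfunctions are approximated with respect to single-scale spaces of fixed level of resolution as introduced in Section~\ref{sect2}''; your two-step splitting---rank truncation via Corollary~\ref{coro:cost complexity} with $\sigma=\sqrt{s_1/s_2}$, then single-side projection onto $V_{j_1}^{(1)}$ and $V_{j_2}^{(2)}$---is exactly this route, and your observation that the one-sided projection error of $f_R$ is controlled by $\|f\|_{H^{s_i}(\Omega_i)\otimes L^2}$ (thanks to orthonormality of the singular vectors on the complementary factor) is the right device to avoid the spurious factor~$R$.
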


In contrast, the sparse tensor product approximation 
requires essentially
\begin{equation}	\label{eq:work(sg)aniso}
  \dof_{sg}(\varepsilon)\sim\varepsilon^{-\max\left\{\frac{n_1}{\min\{s_1,r_1\}},
  	\frac{n_2}{\min\{s_2,r_2\}}\right\}}
\end{equation}
degrees of freedom to approximate functions from the anisotropic
Sobolev space $H_{mix}^{s_1,s_2}(\Omega_1\times\Omega_2)$,
compare \cite{GH13a}. We emphasize, however, that the multiscale 
representation \eqref{eq:low-rank} of the singular value decomposition 
has the same complexity as the sparse tensor product approximation
\eqref{eq:work(sg)aniso}. This implies that the eigenfunctions 
can be stored in a compressed format to further improve the 
complexity \eqref{eq:work(svd)} towards that of the 
respective sparse tensor product approximation.

\begin{remark}
We should comment on the sharpness of our findings. At least 
in the simple bivariate situation with $\Omega := \Omega_1 = 
\Omega_2\subset\mathbb{R}^n$, the rank estimate given by 
\eqref{eq:rate1}, \eqref{eq:rate2} is sharp. This is seen by 
considering Mat\`ern kernels $k_\nu(\boldsymbol{x}_1,
\boldsymbol{x}_2)$, also called Sobolev splines, where $\nu\ge1/2$ 
is the smoothness parameter, compare \cite{FY11,MAT}. They are 
known to be the reproducing kernels of the Sobolev spaces $H^{\nu+n/2}
(\Omega)$, hence they are themselves elements of $H^{2\nu+n}(\Omega
\times\Omega)$ and consequently also elements of $H_{mix}^{s_1,s_2}
(\Omega\times\Omega)$ for any $s_1+s_2 = 2\nu+n$. The rank estimate 
given by \eqref{eq:rate1}, \eqref{eq:rate2} coincides then essentially with 
Weyl's law \cite{Weyl}.
\end{remark}

\section{Multivariate mixed Sobolev smoothness}\label{sect5}
We shall generalize the results from the previous section to the
bivariate approximation of functions which provide multivariate 
dominating mixed derivatives. This means that we consider $m$ 
domains $\Omega_i\subset\mathbb{R}^{n_i}$ with $n_i\in\mathbb{N}$ 
for all $i=1,2,\ldots,m$ and aim at the bivariate approximation of 
functions in the anisotropic Sobolev spaces
\[
  {\bf H}^{\bf s}(\boldsymbol\Omega)
  	:= H^{s_1}(\Omega_1)\otimes H^{s_2}(\Omega_2)\otimes\cdots
		\otimes H^{s_m}(\Omega_m),
\]
which are defined on the $m$-fold product domain 
$\boldsymbol\Omega:=\Omega_1\times\Omega_2
\times\cdots\times\Omega_m$. To this end, we 
introduce the generalized $m$-fold sparse 
tensor product space
\begin{equation*}
  \widehat{V}_J^{\boldsymbol\alpha} 
  	:= \bigoplus_{\boldsymbol\alpha^T{\bf j}\le J} 
  W_{j_1}^{(1)}\otimes W_{j_2}^{(2)}\otimes\cdots\otimes W_{j_m}^{(m)}
\end{equation*}
for an {\em arbitrary\/} vector $\boldsymbol\alpha=
(\alpha_1,\alpha_2,\ldots,\alpha_m)>{\bf 0}$ and 
${\bf j}=(j_1,j_2,\ldots,j_m)\in\mathbb{N}_0^m$.

In accordance with \cite{GH13b}, we have the following 
approximation results with respect to the sparse tensor
product space $\widehat{V}_J^{\boldsymbol\alpha}$.

\begin{theorem}[see \cite{GH13b}]\label{thm:accuracy2}
Let ${\bf 0}\le {\bf s}\le {\bf r}$ and $f\in {\bf H}^{\bf s}
(\boldsymbol\Omega)$. Then, the projector 
\begin{equation}	\label{eq:solution2}
  \widehat{Q}_J^{\boldsymbol\alpha}:{\bf H}^{\bf s}(\boldsymbol\Omega)
  	\to\widehat{\bf V}_J^{\boldsymbol\alpha},\quad \widehat{Q}_J^{\boldsymbol\alpha}f
		= \sum_{\boldsymbol\alpha^T{\bf j}\le J}\big(Q_{j_1}^{(1)}\otimes 
			Q_{j_2}^{(2)}\otimes\cdots\otimes Q_{j_m}^{(m)}\big)f
\end{equation}
on the sparse tensor product space 
$\widehat{\bf V}_J^{\boldsymbol\alpha}$ satisfies
\begin{equation}	\label{eq:convergence rate2}
  \|(I-\widehat{Q}_J^{\boldsymbol\alpha})f\|_{L^2(\boldsymbol\Omega)}\lesssim
    2^{-J\min\{\frac{s_1}{\alpha_1},\frac{s_2}{\alpha_2},\ldots,\frac{s_m}{\alpha_m}\}} 
    J^{(P-1)/2}\|f\|_{{\bf H}^{\bf s}(\boldsymbol\Omega)}.
\end{equation}
Here, $P$ counts how often the minimum is attained 
in the exponent.
\end{theorem}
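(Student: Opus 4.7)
My plan is to combine a multilevel decomposition of $f$ via the tensorized wavelet basis with a lattice-point counting argument. Writing
\[
Q_{\bf j} := Q_{j_1}^{(1)} \otimes Q_{j_2}^{(2)} \otimes \cdots \otimes Q_{j_m}^{(m)},
\]
the atomic decomposition \eqref{eq:expansion} applied in each coordinate yields $f = \sum_{{\bf j} \in \mathbb{N}_0^m} Q_{\bf j} f$, so that the error of the sparse projector reads
\[
f - \widehat{Q}_J^{\boldsymbol\alpha} f = \sum_{\boldsymbol\alpha^T {\bf j} > J} Q_{\bf j} f.
\]

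I would then iterate the one-dimensional projection estimate \eqref{eq:ops} factor by factor, exploiting the tensor product structure of ${\bf H}^{\bf s}(\boldsymbol\Omega)$, to obtain the tensor product bound
\[
\|Q_{\bf j} f\|_{L^2(\boldsymbol\Omega)} \lesssim 2^{-\sum_{i=1}^m j_i s_i} \|f\|_{{\bf H}^{\bf s}(\boldsymbol\Omega)}.
\]
Setting $b_{\bf j} := 2^{\sum_i j_i s_i} \|Q_{\bf j} f\|_{L^2(\boldsymbol\Omega)}$, the Riesz basis property of the tensorized wavelet system additionally provides the square summability $\sum_{\bf j} b_{\bf j}^2 \lesssim \|f\|_{{\bf H}^{\bf s}(\boldsymbol\Omega)}^2$. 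Invoking the triangle inequality on the error expansion and then Cauchy--Schwarz gives
\[
\|f - \widehat{Q}_J^{\boldsymbol\alpha} f\|_{L^2(\boldsymbol\Omega)}^2 \lesssim \left(\sum_{\boldsymbol\alpha^T {\bf j} > J} 2^{-2\sum_i j_i s_i}\right) \|f\|_{{\bf H}^{\bf s}(\boldsymbol\Omega)}^2.
\]

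The main technical obstacle is the lattice-point sum
\[
S_J := \sum_{{\bf j} \ge 0,\; \boldsymbol\alpha^T {\bf j} > J} 2^{-2\sum_i j_i s_i}.
\]
To control it, I would let $m^{*} := \min_i s_i/\alpha_i$ and denote by $I^{*}$ the set of $P$ indices at which this minimum is attained. The decomposition $\sum_i j_i s_i = m^{*} \boldsymbol\alpha^T {\bf j} + \sum_{i \notin I^{*}}(s_i - m^{*} \alpha_i) j_i$ has a second term with strictly positive rates $c_i := s_i - m^{*} \alpha_i > 0$, so that summing over those coordinates yields only an $\mathcal{O}(1)$ geometric contribution. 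What remains is a $P$-dimensional sum over ${\bf j}|_{I^{*}} \ge 0$ constrained by $\sum_{i \in I^{*}} \alpha_i j_i > J$ (up to harmless lower-order corrections). Dyadic blocking with respect to the level $\ell := \sum_{i \in I^{*}} \alpha_i j_i$ uses that the number of admissible lattice points at level $\ell$ is of order $\ell^{P-1}$, giving
\[
S_J \lesssim \sum_{\ell \ge J} \ell^{P-1} 2^{-2 m^{*} \ell} \lesssim J^{P-1} 2^{-2 m^{*} J}.
\]
Taking square roots in the bound on $\|f - \widehat{Q}_J^{\boldsymbol\alpha} f\|_{L^2(\boldsymbol\Omega)}^2$ then yields the estimate \eqref{eq:convergence rate2} with the prefactor $J^{(P-1)/2}$.
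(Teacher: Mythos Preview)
The paper does not supply a proof of this theorem; it is quoted from \cite{GH13b} and used as a black box. Your sketch is precisely the standard argument found there: expand the error as $\sum_{\boldsymbol\alpha^T{\bf j}>J}Q_{\bf j}f$, invoke the tensorized detail estimate $\|Q_{\bf j}f\|_{L^2}\lesssim 2^{-{\bf s}^T{\bf j}}\|f\|_{{\bf H}^{\bf s}}$ coming from \eqref{eq:ops}, and then bound the geometric lattice sum $\sum_{\boldsymbol\alpha^T{\bf j}>J}2^{-2{\bf s}^T{\bf j}}$ by identifying the minimal slope $m^\star=\min_i s_i/\alpha_i$ and counting the $\mathcal{O}(\ell^{P-1})$ lattice points on the critical face. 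So in substance there is nothing to compare.

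Two small remarks on the write-up. First, the step ``triangle inequality then Cauchy--Schwarz with $\sum_{\bf j}b_{\bf j}^2\lesssim\|f\|_{{\bf H}^{\bf s}}^2$'' appeals to the wavelet \emph{norm equivalence} for ${\bf H}^{\bf s}$, which is stronger than what Section~\ref{sect2} states explicitly (only the $L^2$-Riesz property and the one-sided bound \eqref{eq:ops}). A cleaner route, and the one usually taken, is to use the $L^2$-Riesz stability directly to get $\|f-\widehat Q_J^{\boldsymbol\alpha}f\|_{L^2}^2\lesssim\sum_{\boldsymbol\alpha^T{\bf j}>J}\|Q_{\bf j}f\|_{L^2}^2$ and then insert the pointwise bound; this yields the same $S_J$ without invoking the Sobolev characterization. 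Second, in the lattice sum you cannot literally ``sum out'' the non-minimal coordinates first, because the constraint $\boldsymbol\alpha^T{\bf j}>J$ couples all coordinates; the clean fix is to write ${\bf s}^T{\bf j}=m^\star\boldsymbol\alpha^T{\bf j}+\sum_{i\notin I^\star}c_ij_i$ and slice over the value of $\boldsymbol\alpha^T{\bf j}$, which makes the $J^{P-1}$ factor appear as the number of lattice points on the $(P-1)$-dimensional face where $c_i=0$. Your ``up to harmless lower-order corrections'' is covering exactly this point.
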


After having identified the approximation power of the $m$-fold sparse 
tensor product space $\widehat{V}_J^{\boldsymbol\alpha}$ when 
representing a given function $f\in {\bf H}^{\bf s}(\boldsymbol\Omega)$,
we shall next estimate the rank of this sparse tensor product 
approximation like in the bivariate situation. Therefore, we
again make use of the bases $\Phi_j^{(1)}$ and $\Psi_j^{(i)}$, 
$i=2,3,\ldots,m$, and consider a given function
\[
 \widehat{f}_J(\boldsymbol{x}_1,\ldots,\boldsymbol{x}_m) =
 \sum_{\boldsymbol\alpha^T{\bf j} = J}\sum_{k_1\in\Delta_{j_1}}
  \sum_{k_2\in\nabla_{j_2}}\!\!\!\dots\!\!\!\sum_{k_m\in\nabla_{j_m}}
 c_{{\bf j},{\bf k}}\varphi_{j_1,k_1}^{(1)}(\boldsymbol{x}_1)\psi_{j_2,k_2}^{(2)}(\boldsymbol{x}_2)
  	\cdots\psi_{j_m,k_m}^{(2)}(\boldsymbol{x}_m)\in\widehat{V}_J^{\boldsymbol\alpha}.
\]
We intend to estimate the rank when separating the variables $(\boldsymbol{x}_1,
\ldots,\boldsymbol{x}_\ell)$ and $(\boldsymbol{x}_{\ell+1},\ldots,\boldsymbol{x}_m)$,
where $1\le\ell< m$ can be chosen arbitrary, but is fixed throughout this section. 
To this end, we have to bound the rank in each of the tensor product spaces 
$V_{j_1}^{(1)}\otimes W_{j_2}^{(2)}\otimes\dots\otimes W_{j_m}^{(m)}$ 
when splitting the first $\ell$ variables from the last $m-\ell$ variables.
The rank is given by the minimum of the dimensions of the particular 
subspaces, i.e.,  
\[
  \operatorname{rank}(V_{j_1}^{(1)}\otimes W_{j_2}^{(2)}\otimes\dots\otimes W_{j_m}^{(m)})
  	= \min\big\{2^{j_1 n_1+\dots+j_\ell n_\ell},2^{j_{\ell+1} n_{\ell+1}+\dots+j_m n_m}\big\}.
\]
We now have to sum up over all admissible $j_1,j_2,\ldots,j_m$, 
i.e., we have to compute the sum
\[
  \sum_{\boldsymbol\alpha^T{\bf j} = J}\min\big\{2^{j_1 n_1+\dots+j_\ell n_\ell},
  	2^{j_{\ell+1} n_{\ell+1}+\dots+j_m n_m}\big\}.
\]
In order to bound this sum, we abbreviate 
\begin{equation}\label{eq:max}
  \underline{m} := \max_{i\le\ell}\bigg\{\frac{n_i}{\alpha_i}\bigg\},
  \quad \overline{m} := \max_{i>\ell}\bigg\{\frac{n_i}{\alpha_i}\bigg\}
\end{equation}
and note that for given level $\tilde{J}$ with $0\le\tilde{J}\le J$ 
there holds
\begin{gather*}
\max_{\sum_{i\le\ell} \alpha_i j_i=\tilde{J}}
j_1 n_1+\dots+j_\ell n_\ell \le \tilde{J}\underline{m},\\
\max_{\sum_{i>\ell}\alpha_i j_i= \tilde{J}} 
j_{\ell+1} n_{\ell+1}+\dots+j_m n_m
\le\tilde{J}\overline{m}.
\end{gather*}
We conclude
\[
  \sum_{\begin{smallmatrix}\sum_{i\le\ell} \alpha_i j_i=\tilde{J}\\
	\sum_{i>\ell}\alpha_i j_i= J-\tilde{J}\end{smallmatrix}} 
		\min\big\{2^{j_1 n_1+\dots+j_\ell n_\ell},2^{j_{\ell+1} n_{\ell+1}+\dots+j_m n_m}\big\}
  \le J^{\max\{P_1,P_2\}-1} \min\big\{ 2^{\tilde{J}\underline{m}},2^{(J-\tilde{J})\overline{m}}\big\},
\]
where $P_1\le\ell$ and $P_2\le m-\ell$ count how often the maxima
$\underline{m}$ and $\overline{m}$, respectively, are attained 
in \eqref{eq:max}. Thus, we derive
\begin{align*}
  &\sum_{\boldsymbol\alpha^T{\bf j} = J}\min\big\{2^{j_1 n_1+\dots+j_\ell n_\ell},
  	2^{j_{\ell+1} n_{\ell+1}+\dots+j_m n_m}\big\}\\
  &\qquad=\sum_{\tilde{J}=0}^J \sum_{\begin{smallmatrix}\sum_{i\le\ell} \alpha_i j_i=\tilde{J}\\
	\sum_{i>\ell}\alpha_i j_i= J-\tilde{J}\end{smallmatrix}} 
		\min\big\{2^{j_1 n_1+\dots+j_\ell n_\ell},2^{j_{\ell+1} n_{\ell+1}+\dots+j_m n_m}\big\}\\
  &\qquad\le J^{\max\{P_1,P_2\}-1} \sum_{\tilde{J}=0}^J
  	\min\big\{2^{\tilde{J}\underline{m}},2^{(J-\tilde{J})\overline{m}}\big\}.
\end{align*}
The minimum switches for the level $J\overline{m}/(\underline{m}+\overline{m})$,
which leads to
\begin{align*}
  &\sum_{\boldsymbol\alpha^T{\bf j} = J}\min\big\{2^{j_1 n_1+\dots+j_\ell n_\ell},
  	2^{j_{\ell+1} n_{\ell+1}+\dots+j_m n_m}\big\}\\
  &\qquad\le J^{\max\{P_1,P_2\}-1}\Bigg[\sum_{\tilde{J}=0}^{J\overline{m}/(\underline{m}+\overline{m})} 
	\min\big\{2^{\tilde{J}\underline{m}},2^{(J-\tilde{J})\overline{m}}\big\}
  +\sum_{\tilde{J}=J\overline{m}/(\underline{m}+\overline{m})}^J
	\min\big\{2^{\tilde{J}\underline{m}},2^{(J-\tilde{J})\overline{m}}\big\}\Bigg]\\
  &\qquad\le J^{\max\{P_1,P_2\}-1}\Bigg[\sum_{\tilde{J}=0}^{J\overline{m}/(\underline{m}+\overline{m})} 
	2^{\tilde{J}\underline{m}} + \sum_{\tilde{J}=0}^{J\underline{m}/(\underline{m}+\overline{m})} 
	2^{\tilde{J}\overline{m}}\Bigg]\\
	&\qquad\lesssim J^{\max\{P_1,P_2\}-1} 2^{J\underline{m}\overline{m}/(\underline{m}+\overline{m})}.
\end{align*}
This determines the rank of the sparse tensor product approximation,
which we exploit in the following theorem.

\begin{theorem}	\label{thm:cost complexity L}
Let $0\le{\bf s}\le{\bf r}$ and $f\in {\bf H}^{\bf s}(\boldsymbol\Omega)$. 
Then, there exists a rank-$R$ approximation 
\[
   f_R(\boldsymbol{x}_1,\ldots,\boldsymbol{x}_m) = \sum_{r=0}^R g_r^{(1)}(\boldsymbol{x}_1,\ldots,\boldsymbol{x}_\ell) 
   	g_r^{(2)}(\boldsymbol{x}_{\ell+1},\ldots,\boldsymbol{x}_m) \in L^2(\boldsymbol\Omega)
\]
which approximates $f$ as
\begin{equation}\label{eq:approx_esti}
  \|f-f_R\|_{L^2(\boldsymbol\Omega)}\lesssim
		R^{-\beta} (\log R_J)^{(m-1)/2+\beta\max\{P_1,P_2\}-1} 
			\|f\|_{{\bf H}^{\bf s}(\boldsymbol\Omega)}.
\end{equation}
Here, the rate $\beta$ is given by
\begin{equation}\label{eq:approx_rate}
\beta = \min_{i\le\ell}\bigg\{\frac{s_i}{n_i}\bigg\}
  	+ \min_{i>\ell}\bigg\{\frac{s_i}{n_i}\bigg\}
\end{equation}
and $P_1$ and $P_2$ count how often the first and second 
minimum is attained. Moreover, the constants in this error estimate 
do depend on $s_i$ and $n_i$, $i=1,2,\ldots,m$, but not on the rank $R$.
\end{theorem}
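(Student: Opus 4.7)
The plan is to instantiate $f_R$ as (a bivariate regrouping of) the sparse tensor product approximation $\widehat{Q}_J^{\boldsymbol\alpha} f$ from Theorem~\ref{thm:accuracy2}, using the specific choice $\alpha_i = s_i$ for all $i=1,\ldots,m$. With this choice every ratio $s_i/\alpha_i$ equals $1$, so the minimum in \eqref{eq:convergence rate2} is attained $P=m$ times, and Theorem~\ref{thm:accuracy2} immediately gives
\[
  \|f-\widehat{Q}_J^{\bf s}f\|_{L^2(\boldsymbol\Omega)}\lesssim 2^{-J} J^{(m-1)/2}\|f\|_{{\bf H}^{\bf s}(\boldsymbol\Omega)}.
\]
It then remains to bound, in terms of $J$, the rank of $\widehat{Q}_J^{\bf s}f$ after its variables are regrouped as $(\boldsymbol{x}_1,\ldots,\boldsymbol{x}_\ell)$ and $(\boldsymbol{x}_{\ell+1},\ldots,\boldsymbol{x}_m)$.

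To control this rank, I would mimic the argument of Section~\ref{sect4}: in each slice $V_{j_1}^{(1)}\otimes W_{j_2}^{(2)}\otimes\cdots\otimes W_{j_m}^{(m)}$ one may pull the sum of basis coefficients into whichever of the two variable blocks is cheaper, giving the slice-rank bound $\min\{2^{j_1n_1+\cdots+j_\ell n_\ell},\,2^{j_{\ell+1}n_{\ell+1}+\cdots+j_m n_m}\}$. Summing these minima over all admissible $\boldsymbol\alpha^T{\bf j}\le J$ is precisely the combinatorial computation carried out in the paragraphs immediately before the theorem; with $\boldsymbol\alpha={\bf s}$ it produces
\[
  R \;\lesssim\; J^{\max\{P_1,P_2\}-1}\,2^{J\underline{m}\overline{m}/(\underline{m}+\overline{m})},
\]
where $\underline{m}=\max_{i\le\ell}\{n_i/s_i\}$, $\overline{m}=\max_{i>\ell}\{n_i/s_i\}$, and $P_1,P_2$ count how often the two maxima are attained, in accordance with \eqref{eq:max}.

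Finally, I would convert the $J$-dependent error bound into an $R$-dependent one. Writing $\rho:=\underline{m}\overline{m}/(\underline{m}+\overline{m})$, the identity
\[
  \frac{1}{\rho}=\frac{1}{\underline{m}}+\frac{1}{\overline{m}}
  	=\min_{i\le\ell}\bigg\{\frac{s_i}{n_i}\bigg\}+\min_{i>\ell}\bigg\{\frac{s_i}{n_i}\bigg\}=\beta
\]
is immediate. Inverting the rank estimate yields $2^{-J}\lesssim R^{-\beta}\,J^{\beta(\max\{P_1,P_2\}-1)}$, and combining this with $J\lesssim\log R$ and the approximation error above produces \eqref{eq:approx_esti}. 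The step I expect to require the most care is verifying that $\alpha_i=s_i$ actually maximizes the rank-error trade-off: for any other positive $\boldsymbol\alpha$ one has $\gamma:=\min_i\{s_i/\alpha_i\}\le s_i/\alpha_i$, whence $\gamma\cdot\min_{i\le\ell}\{\alpha_i/n_i\}\le\min_{i\le\ell}\{s_i/n_i\}$ and similarly on the second block, so that $\gamma/\rho(\boldsymbol\alpha)\le\beta$. I would include this short optimality check as a lemma before carrying out the construction above.
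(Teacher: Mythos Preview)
Your proposal is correct and follows essentially the same route as the paper: choose $\boldsymbol\alpha=\boldsymbol s$, invoke Theorem~\ref{thm:accuracy2} with $P=m$ to obtain the $2^{-J}J^{(m-1)/2}$ error bound, use the rank computation carried out just before the theorem to bound $R$, and then invert via $1/\rho=\beta$ and $J\lesssim\log R$. The optimality lemma you plan to include is a pleasant extra justification for the choice $\boldsymbol\alpha=\boldsymbol s$, but it is not required for the theorem as stated and the paper omits it.
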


\begin{proof}
As in the bivariate situation, we shall equilibrate the approximation 
power of the extremal spaces in the sparse tensor product construction,
which means that we choose $\alpha_i = s_i$. Thus, the rank $R_J$ of
  \[
 \widehat{Q}_J^{\boldsymbol s} f(\boldsymbol{x}_1,\ldots,\boldsymbol{x}_m) =
 \sum_{{\bf s}^T{\bf j} = J}\sum_{k_1\in\Delta_{j_1}}
  \sum_{k_2\in\nabla_{j_2}}\!\!\!\dots\!\!\!\sum_{k_m\in\nabla_{j_m}}
  c_{{\bf j},{\bf k}}\varphi_{j_1,k_1}^{(1)}(\boldsymbol{x}_1)\psi_{j_2,k_2}^{(2)}({\bf x_2})
  	\cdots\psi_{j_m,k_m}^{(2)}(\boldsymbol{x}_m)\in\widehat{V}_J^{\bf s}
\]
is essentially bounded by
\[
  R_J\lesssim 2^{J\underline{m}\overline{m}/(\underline{m}+\overline{m})}J^{\max\{P_1,P_2\}-1},
  \quad \underline{m} := \max_{i\le\ell}\bigg\{\frac{n_i}{s_i}\bigg\},
  \quad \overline{m} := \max_{i>\ell}\bigg\{\frac{n_i}{s_i}\bigg\}.
\]
This is equivalent to $R_J\lesssim 2^{J/\beta}J^{\max\{P_1,P_2\}-1}$ 
with $\beta$ as defined in \eqref{eq:approx_rate}. Hence, we have
\begin{equation}\label{eq:rank}
  2^{-J}\lesssim \bigg(\frac{R_J}{J^{\max\{P_1,P_2\}-1}}\bigg)^{-\beta} 
  	= R_J^{-\beta}J^{\beta(\max\{P_1,P_2\}-1)}.
\end{equation}
As the convergence rate in \eqref{eq:convergence rate2} becomes
essentially $2^{-J}\log{J^{(m-1)/2}}$ due to the specific choice of 
$\boldsymbol\alpha = \boldsymbol{s}$, we can insert \eqref{eq:rank} 
to get the convergence rate $R_J^{-\beta} J^{(m-1)/2+\beta\max\{P_1,P_2\}-1}$. 
By using finally $J\lesssim \log R_J$, we conclude the desired 
error estimate \eqref{eq:approx_esti}.
\end{proof}

\begin{remark}
(i) As we have seen, in case of multivariate mixed Sobolev 
smoothness, the decay of the singular values is essentially 
the same independent of the dimension of the two domains 
$\Omega_1\times\cdots\times\Omega_{\ell}$ and 
$\Omega_{\ell+1}\times\cdots\times\Omega_m$ with 
which the bivariate approximation is performed. Nonetheless,
for the approximation of the left and right eigenfunctions, this 
holds only true if they are represented in the respective sparse 
tensor product spaces instead of the full tensor product spaces.\\
(ii) In order to optimally estimate the decay of the eigenvalues, 
one has to choose ansatz spaces which provide sufficient
polynomial exactness, i.e., given $f\in {\bf H}^{\bf s}(\boldsymbol
\Omega)$, one chooses ansatz spaces such that it holds 
${\bf r}\ge{\bf s}$ in Theorem~\ref{thm:cost complexity L}.
\end{remark}

\section{Tensor train format}\label{sec:TT}
We shall next apply the previous results for estimating the 
cost of the tensor train approximation in the continuous case. 
We basically proceed as in \cite{GH21} and successively apply 
the singular value decomposition as studied in the previous 
section. This corresponds to the algorithmic
procedure in practical computations. Related results, obtained
by other proof techniques, can be found in \cite{AN,BNS,Hack,SU}.

\subsection{Tensor train decomposition}
Given a function $f\in {\bf H}^{\bf s}(\boldsymbol\Omega)$, we
separate in the first step of the tensor train decomposition the variables 
$\boldsymbol{x}_1\in\Omega_1$ and $(\boldsymbol{x}_2,\ldots,\boldsymbol{x}_m)
\in\Omega_2\times\dots\times\Omega_m$ by the singular value decomposition
\[
  f(\boldsymbol{x}_1,\boldsymbol{x}_2,\ldots,\boldsymbol{x}_n)
  	= \sum_{\alpha_1=1}^\infty\sqrt{\lambda_1(\alpha_1)}\varphi_1(\boldsymbol{x}_1,\alpha_1)
		\psi_1(\alpha_1,\boldsymbol{x}_2,\ldots,\boldsymbol{x}_m).
\]
Herein, $\{\varphi_1(\alpha_1)\}_{\alpha_1\in\mathbb{N}}$ is an orthonormal
basis of $L^2(\Omega_1)$, $\{\psi_1(\alpha_1)\}_{\alpha_1\in\mathbb{N}}$ is 
an orthonormal basis of $L^2(\Omega_2\times\cdots\times\Omega_m)$,
and $\big\{\sqrt{\lambda_1(\alpha_1)}\big\}_{\alpha_1\in\mathbb{N}}$ is a square 
summable sequence of singular values.

Next, since
\[
   \bigg[\sqrt{\lambda_1(\alpha_1)}\psi_1(\alpha_1)\bigg]_{\alpha_1=1}^\infty
   \in \ell^2(\mathbb{N})\otimes L^2(\Omega_2\times\dots\times\Omega_m),
\]
we can separate in the second step of the tensor train decomposition 
$(\alpha_1,\boldsymbol{x}_2)\in\mathbb{N}\times\Omega_2$ from 
$(\boldsymbol{x}_3,\ldots,\boldsymbol{x}_m)\in\Omega_3\times
\dots\times\Omega_m$ by means of a second singular value 
decomposition. This leads to
\begin{equation}\label{eq:tt-step}
\begin{aligned}
 &\bigg[\sqrt{\lambda_1(\alpha_1)}\psi_1(\alpha_1,\boldsymbol{x}_2,
 		\ldots,\boldsymbol{x}_m)\bigg]_{\alpha_1=1}^\infty\\
  	&\qquad\qquad= \sum_{\alpha_2=1}^\infty\sqrt{\lambda_2(\alpha_2)}
		\bigg[\varphi_2(\alpha_1,\boldsymbol{x}_2,\alpha_2)\bigg]_{\alpha_1=1}^\infty
			\psi_2(\alpha_2,\boldsymbol{x}_3,\ldots,\boldsymbol{x}_m).
\end{aligned}
\end{equation}
Herein, $\big\{[\varphi_2(\alpha_1,\alpha_2)]_{\alpha_1\in\mathbb{N}}
\big\}_{\alpha_2\in\mathbb{N}}$ is an orthonormal basis of $\ell^2(\mathbb{N})\otimes 
L^2(\Omega_2)$, $\{\psi_2(\alpha_2)\}_{\alpha_1\in\mathbb{N}}$ is an orthonormal basis 
of $L^2(\Omega_3\times\cdots\times\Omega_m)$, and 
$\big\{\sqrt{\lambda_2(\alpha_2)}\big\}_{\alpha_2\in\mathbb{N}}$ is 
a square summable sequence of singular values.

By repeating the second step and successively separating
$(\alpha_{j-1},\boldsymbol{x}_j)\in\mathbb{N}\times\Omega_j$ from 
$(\boldsymbol{x}_{j+1},\ldots,\boldsymbol{x}_m)\in\Omega_{j+1}
\times\dots\times\Omega_m$ for $j=3,\ldots,m-1$, we finally 
arrive at the representation
\begin{align*}
  f(\boldsymbol{x}_1,\ldots,\boldsymbol{x}_m) &= \sum_{\alpha_1=1}^\infty\cdots\sum_{\alpha_{m-1}=1}^\infty
  	\varphi_1(\alpha_1,\boldsymbol{x}_1)\varphi_2(\alpha_1,\boldsymbol{x}_2,\alpha_2)\\
  &\hspace*{2.5cm}\cdots\varphi_{m-1}(\alpha_{m-2},\boldsymbol{x}_{m-1},\alpha_{m-1})
	\varphi_m(\alpha_{m-1},\boldsymbol{x}_m),
\end{align*}
where 
\[
  \varphi_m(\alpha_{m-1},\boldsymbol{x}_m) 
  	= \sqrt{\lambda_{m-1}(\alpha_{m-1})}\psi_{m-1}(\alpha_{m-1},\boldsymbol{x}_m).
\]

In practice, we truncate the singular value decomposition in step $j$ 
after $r_j$ terms, thus arriving at the finite dimensional representation
\begin{align*}
  f_{r_1,\ldots,r_{m-1}}^{\TT}(\boldsymbol{x}_1,\ldots,\boldsymbol{x}_m)
  	&= \sum_{\alpha_1=1}^{r_1}\cdots\sum_{\alpha_{m-1}=1}^{r_{m-1}}\varphi_1(\alpha_1,\boldsymbol{x}_1)
		\varphi_2(\alpha_1,\boldsymbol{x}_2,\alpha_2)\\&\hspace*{2cm}\cdots
		\varphi_{m-1}(\alpha_{m-2},\boldsymbol{x}_{m-1},\alpha_{m-1})
			\varphi_m(\alpha_{m-1},\boldsymbol{x}_m).
\end{align*}
One readily infers by using Pythagoras' theorem that
the truncation error is then given by
\[
  \|f-f_{r_1,\ldots,r_{m-1}}^{\TT}\|_{L^2(\Omega_1\times\dots\times\Omega_m)}
  	\le\sqrt{\sum_{j=1}^{m-1}\sum_{\alpha_j=r_j+1}^\infty \lambda_j(\alpha_j)},
\]
compare \cite[Proposition 9]{M16}. Note that, for 
$j\ge 2$, the singular values $\{\lambda_j(\alpha)\}_{\alpha\in\mathbb{N}}$ 
in this estimate do not coincide with the singular values from the 
untruncated tensor train decomposition due to the truncation
after the ranks $r_j$.

\subsection{Regularity}
We shall next give bounds on the input data for the singular 
value decomposition and the truncation rank in each step 
of the tensor train decomposition. We mention that our 
analysis covers the computational practice: We compute 
successively the truncated singular value decomposition 
with prescribed accuracy $\varepsilon>0$ as it is done 
in practice.

In the first step of the tensor train decomposition, 
we compute the function 
\[
  \boldsymbol{g}_1(\boldsymbol{x}_2,\ldots,\boldsymbol{x}_m) 
  	:= \bigg[\sqrt{\lambda_1(\alpha_1)}
		\psi_1(\alpha_1,\boldsymbol{x}_2,
			\ldots,\boldsymbol{x}_m)\bigg]_{\alpha_1=1}^{r_1}.
\]
It satisfies
\begin{align*}
  \|\boldsymbol{g}_1\|_{[{\bf H}^{\boldsymbol{t}_1}(\boldsymbol\Upsilon_1)]^{r_1}}^2
  &= \sum_{\alpha_1=1}^{r_1}\lambda_1(\alpha_1)
		\|\psi_1(\alpha_1)\|_{{\bf H}^{\boldsymbol{t}_1}(\boldsymbol\Upsilon_1)}^2\\
  &= \sum_{\alpha_1=1}^{r_1}\lambda_1(\alpha_1)
		\|\varphi_1(\alpha_1)\otimes\psi_1(\alpha_1)\|_{L^2(\Omega_1)\otimes 
			{\bf H}^{\boldsymbol{t}_1}(\boldsymbol\Upsilon_1)}^2\\
  &= \Bigg\|\sum_{\alpha_1=1}^{r_1}\sqrt{\lambda_1(\alpha_1)}
		\varphi_1(\alpha_1)\otimes\psi_1(\alpha_1)\Bigg\|_{L^2(\Omega_1)\otimes 
			{\bf H}^{\boldsymbol{t}_1}(\boldsymbol\Upsilon_1)}^2,
\end{align*}
where we used the notation ${\bf H}^{\boldsymbol{t}_1}(\boldsymbol\Upsilon_1) 
:= H^{s_2}(\Omega_2)\otimes\cdots\otimes H^{s_m}(\Omega_m)$. Vice 
versa, we have by using Pythagoras' theorem 
\begin{align*}
\|f\|_{L^2(\Omega_1)\otimes {\bf H}^{\boldsymbol{t}_1}(\boldsymbol\Upsilon_1)}^2
&= \Bigg\|\sum_{\alpha_1=1}^{\infty}\sqrt{\lambda_1(\alpha_1)}
		\varphi_1(\alpha_1)\otimes\psi_1(\alpha_1)\Bigg\|_{L^2(\Omega_1)\otimes 
			{\bf H}^{\boldsymbol{t}_1}(\boldsymbol\Upsilon_1)}^2\\
&=\Bigg\|\sum_{\alpha_1=1}^{r_1}\sqrt{\lambda_1(\alpha_1)}
		\varphi_1(\alpha_1)\otimes\psi_1(\alpha_1)\Bigg\|_{L^2(\Omega_1)\otimes 
			{\bf H}^{\boldsymbol{t}_1}(\boldsymbol\Upsilon_1)}^2\\
  &\qquad+ \Bigg\|\sum_{\alpha_1=r_1+1}^{\infty}\sqrt{\lambda_1(\alpha_1)}
		\varphi_1(\alpha_1)\otimes\psi_1(\alpha_1)\Bigg\|_{L^2(\Omega_1)\otimes 
			{\bf H}^{\boldsymbol{t}_1}(\boldsymbol\Upsilon_1)}^2	.
\end{align*}
Putting both estimates together yields
\[
  \|\boldsymbol{g}_1\|_{[{\bf H}^{\boldsymbol{t}_1}(\boldsymbol\Upsilon_1)]^{r_1}}
	\le\|f\|_{{\bf H}^{\boldsymbol{s}}(\boldsymbol\Omega)}.
\]

In the $j$-th step of the tensor train decomposition, $j=2,3,\ldots,m-1$, one 
computes the singular value decomposition for the vector-valued function
\[
  \boldsymbol{g}_{j-1}(\boldsymbol{x}_j,\ldots,\boldsymbol{x}_m) 
  	:= \bigg[\sqrt{\lambda_{j-1}(\alpha_{j-1})}
		\psi_{j-1}(\alpha_{j-1},\boldsymbol{x}_j,
			\ldots,\boldsymbol{x}_m)\bigg]_{\alpha_{j-1}=1}^{r_{j-1}}.
\]
This leads to the representation
\begin{equation}\label{eq:g_{j-1}}
  \boldsymbol{g}_{j-1}(\boldsymbol{x}_j,\ldots,\boldsymbol{x}_m) = 
  \Bigg[\sum_{\alpha_j=1}^\infty\sqrt{\lambda_j(\alpha_j)}\varphi_{j}(\alpha_{j-1},\boldsymbol{x}_j,\alpha_j)
  \psi_j(\alpha_j,\boldsymbol{x}_{j+1},\ldots,\boldsymbol{x}_m)\Bigg]_{\alpha_{j-1}=1}^{r_{j-1}}
\end{equation}
which separates $(\alpha_{j-1},\boldsymbol{x}_j)$ from 
$(\boldsymbol{x}_{j+1},\ldots,\boldsymbol{x}_m)$, coupled only
by $\alpha_j$.

We truncate \eqref{eq:g_{j-1}} after $r_j$ terms and derive 
the new function
\[
 \boldsymbol{g}_j(\boldsymbol{x}_{j+1},\ldots,\boldsymbol{x}_m) := 
  \bigg[\sqrt{\lambda_j(\alpha_j)}\psi_j(\alpha_j,\boldsymbol{x}_{j+1},\ldots,\boldsymbol{x}_m)\bigg]_{\alpha_j=1}^{r_j}.
\]
For $\boldsymbol{t}_j := (s_{j+1},\ldots,s_m)$, we find that
\begin{align*}
\|\boldsymbol{g}_j\|_{[{\bf H}^{\boldsymbol{t}_j}(\boldsymbol\Upsilon_j)]^{r_j}}^2
&= \sum_{\alpha_j=1}^{r_j}\lambda_j(\alpha_j)\|\psi_j(\alpha_j)\|_{{\bf H}^{\boldsymbol{t}_j}(\boldsymbol\Upsilon_j)}^2\\
&= \sum_{\alpha_{j-1}=1}^{r_{j-1}} \sum_{\alpha_j=1}^{r_j}\lambda_j(\alpha_j)\|\varphi_j(\alpha_{j-1},\alpha_j)
\otimes\psi_j(\alpha_j)\|_{L^2(\Omega_j)\otimes {\bf H}^{\boldsymbol{t}_j}(\boldsymbol\Upsilon_j)}^2\\
&= \Bigg\|\sum_{\alpha_{j-1}=1}^{r_{j-1}} \sum_{\alpha_j=1}^{r_j}\sqrt{\lambda_j(\alpha_j)}\varphi_j(\alpha_{j-1},\alpha_j)
\otimes\psi_j(\alpha_j)\Bigg\|_{L^2(\Omega_j)\otimes {\bf H}^{\boldsymbol{t}_j}(\boldsymbol\Upsilon_j)}^2.\
\end{align*}
by exploiting the orthonormality of the vector-valued functions 
$[\varphi_j(\alpha_{j-1},\alpha_j)]_{\alpha_{j-1}=1}^{r_{j-1}}$, $\alpha_j
=1,\ldots,r_j$. Analogously to above, we infer that
\[
\|\boldsymbol{g}_j\|_{[{\bf H}^{\boldsymbol{t}_j}(\boldsymbol\Upsilon_j)]^{r_j}}
\le\|\boldsymbol{g}_{j-1}\|_{[L^2(\Omega_j)\otimes 
{\bf H}^{\boldsymbol{t}_j}(\boldsymbol\Upsilon_j)]^{r_{j-1}}},
\]
hence, we conclude
\begin{equation}\label{eq:bound}
\|\boldsymbol{g}_j\|_{[{\bf H}^{\boldsymbol{t}_j}(\boldsymbol\Upsilon_j)]^{r_j}}
\le\|f\|_{{\bf H}^{\boldsymbol{s}}(\boldsymbol\Omega)}
\quad \text{for all $j=1,2,\ldots,m-1$}.
\end{equation}

\subsection{Truncation ranks}
After having proven the regularity of the functions
${\bf g}_j$ for all $j=1,\ldots,m-1$, we shall next determine 
the truncation ranks. To this end, consider a prescribed
approximation accuracy $\varepsilon > 0$.

In the we first step of the tensor train decomposition, i.e.,
for $j=1$, we can immediately apply Theorem \ref{thm:cost complexity L} 
to get essentially the decay $r_1^{-\beta_1}$ in the truncation error, 
where the rate $\beta_1$ is given by
\begin{equation}\label{eq:beta1}
\beta_1 = \bigg\{\frac{s_1}{n_1}\bigg\}
  	+ \min_{i=2}^m\bigg\{\frac{s_i}{n_i}\bigg\}.
\end{equation}
Hence, we have to choose $r_1:=\varepsilon^{-1/\beta_1}$
to essentially get the truncation error $\varepsilon$ in the first step. 

In the $j$-th step of the tensor train decomposition, we
set $\boldsymbol{y}_j:=(\boldsymbol{x}_{j+1},\ldots,\boldsymbol{x}_m)
\in\boldsymbol\Upsilon_j:=\Omega_{j+1}\times\cdots\times\Omega_m$ and
compute the kernel function $\kappa_j\in L^2(\boldsymbol\Upsilon_j,\boldsymbol\Upsilon_j)$
given by
\[
  \kappa_j(\boldsymbol{y}_j,\boldsymbol{y}_j')
  := \sum_{\alpha_{j-1}=1}^{r_{j-1}} \lambda_{j-1}(\alpha_{j-1})\int_{\Omega_j}
  	\psi_{j-1}(\alpha_{j-1},\boldsymbol{x}_j,\boldsymbol{y}_j)
	\psi_{j-1}(\alpha_{j-1},\boldsymbol{x}_j,\boldsymbol{y}_j')\dd\boldsymbol{x}_j.
\]
This gives rise to the spectral decomposition
\begin{equation}\label{eq:spectral}
\kappa_j(\boldsymbol{y}_j,\boldsymbol{y}_j') = \sum_{\alpha_j=1}^{\infty}
	\lambda_j(\alpha_j)\psi_{j}(\alpha_{j},\boldsymbol{y}_j)\psi_{j}(\alpha_{j},\boldsymbol{y}_j').
\end{equation}
By setting 
\[
  \varphi_{j}(\alpha_{j-1},\boldsymbol{x}_j,\alpha_j) := \frac{\sqrt{\lambda_{j-1}(\alpha_{j-1})}}{\sqrt{\lambda_j(\alpha_j)}}
  	\int_{\boldsymbol\Upsilon_j}\psi_{j-1}(\alpha_{j-1},\boldsymbol{x}_j,\boldsymbol{y}_j)
	\psi_j(\alpha_j,\boldsymbol{y}_j)\dd\boldsymbol{y}_j,
\]
we arrive at the desired singular value decomposition \eqref{eq:g_{j-1}}.

From
\[
  \partial_{\boldsymbol{y}}^{\boldsymbol\alpha}
  \partial_{\boldsymbol{y'}}^{\boldsymbol\beta}
  \kappa_j(\boldsymbol{y}_j,\boldsymbol{y}_j') 
  = \sum_{\alpha_{j-1}=1}^{r_{j-1}} \lambda_{j-1}(\alpha_{j-1})\int_{\Omega_j}
  \partial_{\boldsymbol{y}}^{\boldsymbol\alpha}\psi_{j-1}(\alpha_{j-1},\boldsymbol{x}_j,\boldsymbol{y}_j)
  \partial_{\boldsymbol{y'}}^{\boldsymbol\beta}\psi_{j-1}(\alpha_{j-1},\boldsymbol{x}_j,\boldsymbol{y}_j')\dd\boldsymbol{x}_j
\]
for any pair of multi-indices $\boldsymbol\alpha,\boldsymbol\beta\in\mathbb{N}_0^{m-j}$,
we conclude
\begin{align*}
\|\kappa_j\|_{{\bf H}^{\boldsymbol{t}_j}(\boldsymbol{\Upsilon}_j)
  	\otimes {\bf H}^{\boldsymbol{t}_j}(\boldsymbol{\Upsilon}_j)}
	&\le \sum_{\alpha_{j-1}=1}^{r_{j-1}} \lambda_{j-1}(\alpha_{j-1})
		\|\psi_{j-1}(\alpha_{j-1})\|_{L^2(\Omega_j)\otimes
			{\bf H}^{\boldsymbol{t}_j}(\boldsymbol\Upsilon_j)}^2\\
	&=\|\boldsymbol{g}_{j-1}\|_{[L^2(\Omega_j)\otimes
			{\bf H}^{\boldsymbol{t}_j}(\boldsymbol\Upsilon_j)]^{r_{j-1}}}^2.
\end{align*}
Therefore, it holds $\kappa_j\in {\bf H}^{\boldsymbol{t}_j}(\boldsymbol{\Upsilon}_j)
\otimes {\bf H}^{\boldsymbol{t}_j}(\boldsymbol{\Upsilon}_j)$ which essentially 
implies the rate of convergence $r_j^{-2\beta_j}$ with
\begin{equation}\label{eq:betaj}
\beta_j = \min_{i=j+1}^m\bigg\{\frac{s_i}{n_i}\bigg\}
\end{equation}
in the spectral decomposition \eqref{eq:spectral}. This
in turn leads to the choice $r_j :=\varepsilon^{-1/\beta_j}$
to essentially derive the truncation error $\varepsilon$.

\subsection{Complexity}
We summarize our findings for the continuous tensor 
train decomposition in the following theorem, in which 
we estimate the ranks required.

\begin{theorem}\label{thm:ranks}
Let $f\in {\bf H}^{\boldsymbol{s}}(\boldsymbol\Omega)$ for 
some fixed $\boldsymbol{s}>{\bf 0}$ and $0<\varepsilon<1$. 
Choose the truncation ranks $r_j :=\varepsilon^{-1/\beta_j}$ 
for $j=1,\ldots,m-1$, where $\beta_j$ is given in \eqref{eq:beta1}
for $j=1$ and in \eqref{eq:betaj} for $j>1$. Then, the overall 
truncation error of the tensor train decomposition is essentially 
bounded by
\[
  \|f-f_{r_1,\ldots,r_{m-1}}^{\TT}\|_{L^2(\boldsymbol\Omega)}
  	\lesssim\sqrt{m}\varepsilon.
\]
The storage cost for $f_{r_1,\ldots,r_{m-1}}^{\TT}$
are given by
\begin{equation}\label{eq:costTT}
  r_1 + \sum_{j=2}^{m-1} {r_{j-1} r_j}
  	= \varepsilon^{-1/\beta_1} + \varepsilon^{-1/\beta_1-1/\beta_2} 
		+\dots+ \varepsilon^{-1/\beta_{m-2}-1/\beta_{m-1}}
\end{equation}
and hence are bounded by $\mathcal{O}\big(m\varepsilon^{-2/
\min_{j=1}^{m-1}\{\beta_j\}}\big)$.
\end{theorem}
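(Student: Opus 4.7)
The plan is to combine the orthogonality-based truncation identity for the TT format with the tail-of-spectrum estimates that follow from Theorem~\ref{thm:cost complexity L} applied stepwise along the TT sweep, and then to do simple arithmetic bookkeeping for the storage cost.

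First, I would start from the a priori truncation bound recalled at the end of subsection~6.1,
\[
   \|f - f_{r_1,\ldots,r_{m-1}}^{\TT}\|_{L^2(\boldsymbol\Omega)}^2
   \le \sum_{j=1}^{m-1}\sum_{\alpha_j=r_j+1}^\infty \lambda_j(\alpha_j),
\]
which localises the analysis to the tails of the $m-1$ intermediate spectra. For the first split ($j=1$), I would apply Theorem~\ref{thm:cost complexity L} with $\ell=1$ directly to $f\in {\bf H}^{\boldsymbol{s}}(\boldsymbol\Omega)$; since for a bivariate SVD the squared $L^2$-error equals exactly the tail $\sum_{\alpha_1>r_1}\lambda_1(\alpha_1)$, this yields (essentially) $\lesssim r_1^{-2\beta_1}$ with $\beta_1$ as in \eqref{eq:beta1}. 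The choice $r_1=\varepsilon^{-1/\beta_1}$ then makes this tail $\lesssim \varepsilon^2$.

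For $j\ge 2$, I would invoke the uniform regularity chain \eqref{eq:bound}, which ensures that the kernel $\kappa_j$ built from $\boldsymbol{g}_{j-1}$ lies in ${\bf H}^{\boldsymbol{t}_j}(\boldsymbol\Upsilon_j)\otimes{\bf H}^{\boldsymbol{t}_j}(\boldsymbol\Upsilon_j)$ with norm bounded, independently of $j$, by $\|f\|_{{\bf H}^{\boldsymbol{s}}(\boldsymbol\Omega)}^2$. The spectral decomposition \eqref{eq:spectral} is then the SVD of a symmetric kernel between two copies of $\boldsymbol\Upsilon_j$, so Theorem~\ref{thm:cost complexity L}, applied to this symmetric bivariate splitting, delivers $\sum_{\alpha_j>r_j}\lambda_j(\alpha_j)\lesssim r_j^{-2\beta_j}$ with $\beta_j$ as in \eqref{eq:betaj}. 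Substituting $r_j=\varepsilon^{-1/\beta_j}$ again yields $\lesssim\varepsilon^2$. Summing over $j=1,\ldots,m-1$ and taking the square root gives the claimed bound $\sqrt{(m-1)}\,\varepsilon\le\sqrt{m}\,\varepsilon$.

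For the storage estimate, the TT representation requires $r_1$ degrees of freedom in the first core, $r_{j-1}r_j$ in cores $j=2,\ldots,m-1$, and $r_{m-1}$ in the last core; summing gives the expression in \eqref{eq:costTT}. Each summand has the form $\varepsilon^{-1/\beta_{j-1}-1/\beta_j}$, whose exponent is bounded above by $2/\min_{k}\beta_k$, so all $m-1$ summands are majorised by $\varepsilon^{-2/\min_j\beta_j}$ and the total by $\mathcal{O}\bigl(m\,\varepsilon^{-2/\min_j\beta_j}\bigr)$. The only delicate point is the bookkeeping at step~$j\ge 2$: one must recognise that the tail of the spectrum $\lambda_j$ in the TT recursion coincides with the $L^2$-error of the SVD-truncation of $\kappa_j$, so that the bivariate rate of Theorem~\ref{thm:cost complexity L} is applicable with the exponent doubled (since $\lambda_j$ are squared singular values) and with the logarithmic factors absorbed in the ``essentially'' convention.
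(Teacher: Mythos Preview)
Your outline is essentially the paper's own argument: the paper likewise starts from the Pythagorean TT truncation bound of Section~6.1, applies Theorem~\ref{thm:cost complexity L} directly to $f$ for $j=1$, and for $j\ge 2$ passes through the kernel $\kappa_j\in{\bf H}^{\boldsymbol{t}_j}(\boldsymbol\Upsilon_j)\otimes{\bf H}^{\boldsymbol{t}_j}(\boldsymbol\Upsilon_j)$ (whose regularity is controlled by \eqref{eq:bound}) to obtain the rate $r_j^{-2\beta_j}$; the storage bookkeeping is identical.

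One point of caution in your last paragraph: the statement that ``the tail of the spectrum $\lambda_j$ in the TT recursion coincides with the $L^2$-error of the SVD-truncation of $\kappa_j$'' is not literally true. The quantity entering the TT bound is the $\ell^1$-tail $\sum_{\alpha_j>r_j}\lambda_j(\alpha_j)$, which equals the \emph{squared} $L^2$-error of the rank-$r_j$ truncation of $\boldsymbol g_{j-1}$; by contrast, the $L^2$-error of the rank-$r_j$ truncation of $\kappa_j$ as a bivariate function on $\boldsymbol\Upsilon_j\times\boldsymbol\Upsilon_j$ is the $\ell^2$-tail $\big(\sum_{\alpha_j>r_j}\lambda_j(\alpha_j)^2\big)^{1/2}$, since the singular values of $\kappa_j$ are the $\lambda_j(\alpha_j)$ themselves, not their square roots. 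The paper is equally terse at this step, so you are not deviating from its level of rigor, but keep the distinction in mind if you need to make the passage from the kernel regularity to the $\ell^1$-tail bound fully precise.
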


\begin{remark}\label{rem:ranks}
In case of $n:=n_1=\dots=n_m$ and $s:=s_1=\dots=s_m$, 
the cost of the tensor train decomposition is $\mathcal{O}
(m\varepsilon^{-2n/s})$. Thus, the cost is essentially independent
of the number $m$ of subdomains.
\end{remark}

\section{Concluding remarks}\label{sec:conclusion}
In this article, we have studied the convergence
of the truncated singular value decomposition for
functions from Sobolev spaces with dominating mixed
smoothness. With these convergence results at hand,
we proved that the particular ranks of the 
tensor train approximation are essentially independent 
of the overall dimension of the product 
domain. This is in contrast to the situation of approximating 
functions from the isotropic Sobolev spaces, where the 
maximum rank grows exponentially with the overall 
dimension, compare~\cite{GH21}.

We also like also to
comment on the relationship of the tensor train representation 
and deep neural networks (DNN). Indeed, it has been shown 
that one-dimensional basis functions, e.g.\ polynomials or 
wavelets, can be approximated up to an error $\varepsilon$
by DNNs with ReLU activation functions at cost $\sim |\log\varepsilon|$.
Moreover, the multiplication 
\[
(x,y)\mapsto x \cdot y = \frac{1}{4}\big\{(x+y)^2-(x-y)^2\big\}
\]
can easily be expressed by an additional layer and the 
$x\mapsto x^2$ activation function, which in turn can be 
approximated by a ReLU network with $\mathcal{O}(|\log\varepsilon|)$ 
layers, compare \cite{Yarotsky}. Vice versa,
the tensor train representation corresponds to a 
network with $m$ layers and the bivariate activation 
function $(x,y) \mapsto x \cdot y $, which itself can be 
represented by a multilayer ReLU network as described 
above. This implies that upper bounds for the DNN complexity 
can easily be derived from our presented results.

We finally want to 
highlight that the major progress of DNNs relies in the use of 
compositions of nonlinear functions as a tool for approximation.
In \cite{MP}, the authors introduced a class of ($m$-variate) 
functions which can be expressed by the composition of 
bivariate functions. Furthermore, it was shown that the overall 
complexity of a DNN for such functions is bounded by the number 
of compositions (i.e.\ layers) times the approximation complexity 
of the bivariate functions. The tensor train representation
is obviously contained in this class and the nonlinearity is 
simply the bilinear map. In the recent work \cite{BNS}, the 
authors have made the remarkable observation that 
the converse holds also true to a certain extent: The 
approximation rate of a corresponding tree based tensor 
network provides the same convergence rate as 
DNNs in the case of typical activation functions, 
as for example the ReLU activation function.

\bigskip
\noindent
{\bf Acknowledgment.} Michael Griebel was supported by 
the {\em Hausdorff Center for Mathematics} in Bonn, funded 
by the Deutsche Forschungsgemeinschaft (DFG, German 
Research Foundation) under Germany’s Excellence Strategy 
-EXC-2047/1-390685813 and the Sonderforschungsbereich 
1060 {\em The Mathematics of Emergent Effects}.


\end{document}